\newcommand{\ep}{\varepsilon}
\newcommand{\cA}{{\cal A}}
\newcommand{\cB}{{\cal B}}
\newcommand{\cD}{{\cal D}}
\newcommand{\cI}{{\cal I}}
\newcommand{\cN}{{\cal N}}
\newcommand{\cP}{{\cal P}}
\newcommand{\cR}{{\cal R}}
\newcommand{\cS}{{\cal S}}
\newcommand{\cT}{{\cal T}}
\newcommand{\cX}{{\cal X}}
\newcommand{\cZ}{{\cal Z}}
\newcommand{\bN}{{\mathbb N}}
\newcommand{\bR}{{\mathbb R}}
\newcommand{\Ts}{\mathcal{T}}
\DeclareMathOperator*{\argmax}{argmax}
\providecommand*{\cupdot}{%
	\mathbin{%
		\mathpalette\@cupdot{}%
	}%
}
\newcommand*{\@cupdot}[2]{%
	\ooalign{%
		$\m@th#1\cup$\cr
		\hidewidth$\m@th#1\cdot$\hidewidth
	}%
}
\newtheorem{theorem}{Theorem}
\newtheorem{lemma}{Lemma}
\newtheorem{proposition}{Proposition}
\newtheorem{remark}{Remark}
\newtheorem{definition}{Definition}
\newtheorem{corollary}{Corollary}
\def\endpf{{\ \hfill\hbox{\vrule width1.0ex height1.0ex}\parfillskip 0pt
	}}
\newenvironment{proof}{\noindent{\bf Proof:}}{\endpf}
\newcounter{figurecounter}
\begin{document}
\setlength{\baselineskip}{20pt}

\title{Bayesian games with nested information%
\thanks{
We thank three anonymous referees for their valuable input, which helped us significantly improve the paper. We also thank Ziv Hellman, Ehud Lehrer, Dov Samet, and Omri Solan for useful discussions. Jacobovic acknowledges the support of the Israel Science Foundation, grant \#3739/24. Solan acknowledges the support of the Israel Science Foundation, grant \#211/22.}}

\author{Royi Jacobovic\footnote{School of Mathematical Sciences, Tel-Aviv University, Tel-Aviv, Israel, 6997800, E-mail: royijacobo@tauex.tau.ac.il.},\and
John Yehuda Levy\footnote{Adam Smith Business School, University of Glasgow, 2 Discovery Place, Glasgow, G11 6EY, UK, E-mail: john.levy@glasgow.ac.uk},
\and Eilon Solan\footnote{School of Mathematical Sciences, Tel-Aviv University, Tel-Aviv, Israel, 6997800, E-mail: eilons@tauex.tau.ac.il.}}

\maketitle

\begin{abstract}
\setlength{\baselineskip}{20pt}
A Bayesian game is said to have nested information if the players are ordered,
and each player knows the types of all players that follow her in that order.
We prove that all multiplayer 
Bayesian games with finite actions spaces, 
bounded payoffs,
Polish type spaces, 
and nested information admit a Bayesian equilibrium. 


\end{abstract}

\bigskip
\noindent\textbf{Keywords:}
Bayesian games, nested information, Bayesian equilibrium, selection theorems.

\bigskip
\noindent\textbf{JEL Classification:} C72.

\section{Introduction}

Although models of incomplete information are 
abundant
in economic modeling, general results on existence of equilibrium are hard to come by. 
In particular, 
while often it is natural to 
model the possible rewards and beliefs of agents  using a continuum of possibilities, 
there are relatively few  general 
existence
results at our disposal, and they use fairly restrictive assumptions on the players information. 
In this paper, we examine games under a naturally arising information structure,
namely, \emph{nested information}, where the
players can be ordered 
according to the amount of information they possess, 
from the most knowledgeable to the least knowledgeable.

Harsanyi \cite{Harsanyi1967} laid the foundation of games of incomplete information, also known as Bayesian games, which have greatly influenced the development of game theory. 
In that model, each agent has a \emph{type}, which includes 
her
belief about payoffs, others' beliefs about the payoffs, others' beliefs about others' beliefs about the payoffs, and so forth. 
There is a prior over the possible type profiles that may occur. 
Each agent is informed of 
her
own type, and must choose 
her
policy as a function of it. Payoffs are a function of types and actions, and agents try to maximize their expected utilities, given the strategies of the others, leading to the notion of \emph{Bayesian equilibrium}, the natural generalization of \emph{Nash equilibrium} to the incomplete-information setup.

While games in which agents may have only finitely many types pose no difficulty for equilibrium existence, 
when there is a continuum of types the situation becomes much thornier. 
These are standards frameworks in economic modeling, as it is natural and convenient to allow, e.g., prices, quantities, and profits, to assume any value (within some range). 
However, the use of 
a continuum of types
makes it extremely difficult to show that Bayesian equilibrium must exist. 

One of the very few general existence results is Milgrom and Weber \cite{MW1985},
who 
assumed that the prior belief of the agents is either independent across types, or at least absolutely continuous with respect to some independent prior (i.e., absolutely continuous with respect to the product of the marginals). 
It remained for some time an open question as to whether, failing this condition, equilibria could fail to exist.
Simon \cite{Simon2003} 
showed
that this was indeed the case
by constructing
an example of a Bayesian game with a continuum of types and no Bayesian equilibrium. 
Hellman \cite{Hellman2014} provided an example of a two-player Bayesian game with finite action spaces and no Bayesian  $\ep$-equilibrium for all $\ep > 0$ sufficiently small;
that is, for every strategy profile, 
 a positive probability of types of one of the players can profit more than $\ep$ by deviating. 
Simon and Tomkowicz \cite{Simon2018, Simon2023} provided examples of, respectively, three-player and two-player Bayesian games that do not admit a Harsanyi $\ep$-equilibrium for $\ep > 0$ sufficiently small; that is, for every strategy profile, at least one player can profit more than $\ep$ by deviating at the ex-ante stage game.

The information structure we examine in this paper is that of \emph{nested information}; that is, 
the players can be ordered from most knowledgeable to least knowledgeable. The most knowledgeable player (say, Player 1) knows everything Player 2 knows (and possibly more), Player 2 knows everything Player 3 knows (and possibly more), etc. 
Such structures have been modeled in hierarchical organization paradigms, financial market games, persuasion models, and others; we recall some of these works and more below. 
Such games generally do not satisfy the absolute continuity condition of Milgrom and Weber \cite{MW1985}.
For instance, if there are three agents, two of which are informed of a value $v$ in some range $[\underline{v},\overline{v}]$ which distributes continuously while the third is not, the possible type profiles distribute continuously along a diagonal,
and do not satisfy the absolute continuity condition.

In this work, 
we study these games when players have finitely many actions at their disposal. 
Our main result shows that in such games, Bayesian equilibria do exist, 
thereby exhibiting an additional class of incomplete information games possessing equilibria. 
(A discussion on models with a continuum of actions appear in Section~\ref{section:discussion}.) 
Our proof introduces two new tools to the study of Bayesian games, which may
prove useful also for other classes of games as well as other questions on Bayesian
games. 

The first tool, used for establishing existence of Bayesian $\varepsilon$-equilibrium, is a finite approximation of the belief hierarchy. 
As is well known, the players' beliefs about the payoffs, 
the players' belief about the others' beliefs, 
the players' belief about the others' beliefs about the others' beliefs, and so forth, form an infinite hierarchy. 
When information is nested, as we will elaborate
below,
this infinite hierarchy is determined by the first $n$-orders of the ladder, where $n$ is the number of players. 
This finiteness of the relevant levels will allow us to 
construct a finite approximation of the space of infinite hierarchies that is sufficient for our purpose: We will define an approximating Bayesian game whose finite type spaces are induced by this finite approximation,
and show that 
\color{black}
a Bayesian equilibrium of this game,
which exists by 
\cite{Harsanyi1967}, yields a Bayesian $\varepsilon$-equilibrium of the original game.

The second tool, used for establishing existence of Bayesian equilibrium, is 
the
Measurable ``Measurable Choice” Theorem 
by
Mertens \cite{Me87}, 
a tool which had previously been used in the study of stochastic games but is novel in its application to Bayesian games.\footnote{An application of the Measurable ``Measurable Choice'' Theorem to a one sender - many receivers game can be found in Zeng \cite{Zeng 2023}.} 
To construct a Bayesian equilibrium
we would like to take a limit of Bayesian $\varepsilon$-equilibria as $\varepsilon$
goes to
$0$. 
However, 
it is well known that in the limit, correlation 
may be introduced; see Stinchcombe \cite{Stinchcombe2011}. Conceptually, we construct the equilibrium among the limit points of a sequence of Bayesian $\frac{1}{n}$-equilibria, step-by-step, starting from the least knowledgeable player, 
and for each player we need to use a 
purification result to guarantee appropriate consistency with the selections already chosen. Not only is the purification done repeatedly, but it needs to be done a continuum-many times at each stage, all in a measurable fashion; 
this is precisely where the Measurable “Measurable Choice” 
Theorem comes into play.

\paragraph*{Structure of the paper.}

The paper is organized as follows. Section~\ref{sec:lit} discusses related literature on Bayesian games and on nested information structures.
Section~\ref{section:model} is devoted to the presentation of the model and the main result. Section~\ref{sec:driving} gives heuristic overviews of the proofs. The proof of the main result appears in Section~\ref{section:proof}. Further discussion and open problems appear in Section~\ref{section:discussion}.


\section{Literature on Bayesian Equilibrium and on Nested Information}\label{sec:lit}

\paragraph*{Bayesian Equilibria} 
Since Bayesian (and even Harsanyi) $\ep$-equilibria need not exist in Bayesian games,
it is important to find sufficient conditions on the parameters of the game that ensure 
they exist. A large literature expanded the sufficient conditions identified by Harsanyi \cite{Harsanyi1967} and Milgrom and Weber \cite{MW1985}.

Stinchcombe and White \cite{Stinchcombe1992}  proved that 
when all players share the same information,
or when there are two players and information is nested, a Harsanyi equilibrium exists.
Ui \cite{Ui2016} proved the existence and uniqueness of Bayesian equilibrium in Bayesian games
where the payoff function is continuously differentiable on the action space for each vector of types,
and its gradient satisfies certain conditions.
Hellman and Levy \cite{Hellman2017, Hellman2019} studied 
Bayesian games with purely atomic types;
they showed that a Bayesian equilibrium exists when the common knowledge relation is smooth,
namely, the common knowledge classes are level sets of a Borel function.
Moreover,
for any common knowledge relation that is not smooth, there exists a type space that yields this common knowledge relation and payoff functions such that the resulting  Bayesian game does not have a Bayesian $\ep$-equilibrium, provided $\ep$ is sufficiently small.
Carbonell-Nicolau and McLean \cite{Carbonell2018} extended the result of Milgrom and Weber \cite{MW1985} to Bayesian games with general action sets, by requiring that the payoff functions are upper semi-continuous and satisfy a condition related to Reny's uniform payoff security (Reny \cite{Reny 1999}).
Olszewski and Siegel \cite{Olszewski2023}
simplified the application of Reny’s~\cite{Reny 1999} better-reply security to
Bayesian games where players' types are independent, and used this condition to prove the existence of Harsanyi equilibria for
classes of games in which payoff discontinuities arise only at ``ties.'' 

Several papers provided sufficient conditions that guarantee the existence of a \emph{pure} Bayesian equilibrium, see, e.g., 
Radner and Rosenthal \cite{Radner 1982},
Vives \cite{Vives 1990}, 
Khan and Sun \cite{Khan 1999}, Reny \cite{Reny 2011},
and Barelli and Duggan \cite{Barelli 2015}. While \cite{Radner 1982},
assumed the players' types are independent,
and \cite{Barelli 2015} made the more general assumption of \cite{MW1985} regarding the absolute continuity of the joint distribution of types,
the other works do not make these assumptions. Existence of equilibria in Bayesian games with
\emph{infinitely} many players was studied by, e.g.,
Kim and Yannelis \cite{Kim1997}, 
Balbus et al.~\cite{Balbus 2015}, and Yang \cite{Yang 2022}. 

It is interesting to note that 
Bayesian games with nested information can be recast as regular projective games (see Myerson and Reny \cite{Myerson2020}, Section 9).
It follows from Theorem 9.3 in Myerson and Reny \cite{Myerson2020} that 
for every $\ep > 0$, 
Bayesian games with nested information admit
a Bayesian $\ep$-equilibrium 
under proper technical conditions,
which include the continuity of the payoff function over the type space
and the fact that the type distribution has a continuous density function.
Our paper strengthens \cite{Myerson2020} by
(i) proving the result for $\ep = 0$,
and 
(ii) weakening the conditions required to derive the existence result
(while requiring that the set of actions is finite rather than general) 
by dropping the requirement that the prior has a density with respect to a product distribution on types, which, would not be satisfied in many nested information structures of interest.

\paragraph*{Nested Information}

Nested information arises naturally in
strictly hierarchical organizations, 
where higher-level managers have
more information than lower-level managers and workers. 
For example, Mathevet and Taneva \cite{MT 2022} considered a game
where these information structures arise endogenously. 
In their case,
before the agents simultaneously take their payoff-relevant action, there is cheap-talk communication between the players following the strict
order prescribed by the hierarchy; that is, each agent sends messages to the agent immediately below her. 
It can be shown that 
regardless of the exact information transmitted, the information structure endogenously generated in the cheap-talk stage will be nested.

Nested information also arises naturally in situations where players obtain or are exposed to different levels of information. 
For example, managers of firms are more informed about the firm's financial situation than large investors, who in turn are more informed than small investors.
Experiments on financial market games
where investors can predict future dividends or future value of a certain stock for different spans of time have been reported by, e.g.,
Toth \cite{Toth} and Huber \cite{Huber}. 

Another model with nested information
is when 
players are divided into two subsets:
those who obtain symmetric information about the state of the world,
and those who are completely ignorant about it.
For example, 
Debo et al.~\cite{Debo2012} and Kremer and Debo \cite{Kremer2016}
study service system that can provide service in various qualities. Customers have two possible types:
some know the service quality,
while the others are not exposed to this information. 
Additional literature on service systems with similar features can be found in Hassin~\cite{Hassin2016}. 

A more general information structure is considered in Wu et al.~\cite{Wu2021},
who study a routing model where the players are divided into groups, 
and the players in each group obtain the same information on the state of nature.
When the number of groups is $2$ and the players in one of the groups obtain no information,
or when the signals that the groups share are nested,
this model exhibits nested information as well.

\color{black}

Finally, nested information arises in two-player models, where one player is more informed than the other,
like dynamic games with asymmetric information (e.g., Aumann and Maschler \cite{aum, aum95}, Cardaliaguet and Rainer \cite{Cardaliaguet}, Gr\"un \cite{Grun}, De Angelis et al.~\cite{De Angelis2022b}, and Jacobovic \cite{Jacobovic2022}),
and Bayesian persuasion models (e.g., Kamenica and Gentzkow \cite{Kamenica2011} and Kamenica \cite{Kamenica2019}).
\color{black}

\section{The Model and Main result}
\label{section:model}

\noindent\textbf{Notations.}
Let $\cN = \{1,2,\ldots,n\}$, with $n$ finite. Whenever $(X_i)_{i \in \cN}$ is a collection of sets,
we denote their Cartesian product by $X \equiv \prod_{i \in \cN} X_i$;
for $j \in \cN$ we denote $X_{-j} \equiv \prod_{i \in \cN \setminus \{j\}} X_i$.
For any $1\leq j_1\leq j_2\leq n$ denote
$[j_1:j_2] \equiv \{j_1,j_1+1,j_1+2,\ldots,j_2\}$
and $X_{j_1:j_2}\equiv\prod_{i=j_1}^{j_2}X_i$.
A product of measurable spaces will always be considered a measurable space with the product $\sigma$-field.
Whenever $x = (x_i)_{i \in \cN}$ is a vector and $j \in \cN$, 
we set $x_{-j} \equiv (x_i)_{i \in \cN \setminus \{j\}}$, and 
for any $1\leq j_1\leq j_2\leq n$ we set $x_{j_1:j_2}\equiv\left(x_{j_1},x_{j_1+1},\ldots,x_{j_2}\right)$.
When $(U_i)_{i=1}^n$ are real-valued functions,
we denote by $U_{j_1:j_2}$ the vector-valued function $(U_{j_1},U_{j_1+1},\dots,U_{j_2})$.

For every measurable set $X$, we denote by $\Delta(X)$ the set of probability distributions on $X$.
We consider $\Delta(X)$ as a topological space, e.g., by endowing it with some metric like the total variation metric or the Prokhorov metric. 
When $X$ and $Y$ are two random variables, 
we say that \emph{$Y$ is determined by $X$} if there exists a measurable function $\kappa(\cdot)$ such that $\kappa(X)=Y$ with probability one.
\bigskip

\begin{definition}[Bayesian game]
\label{def:bayesian:game}
A \emph{Bayesian game} $\Gamma$ is given by
\begin{itemize}
\item A finite set of players $\mathcal{N}\equiv\left\{1,2,\ldots,n\right\}$, for some  $n \geq2$.

\item For each $i\in\mathcal{N}$, a topological type space $\mathcal{T}_i$.

\item A common prior distribution ${\mathbb P}$ on $\mathcal{T}\equiv \prod_{i \in \cN} \cT_i$.


\item For each $i\in\mathcal{N}$, 
a topological space $\cA_i$ of actions.

\item 
For each $i \in \cN$, a measurable payoff function $R_i : \mathcal{T} \times \cA \to \bR$.
For each $i \in \cN$,
we denote by $R_i(a) : \mathcal{T} \to \bR$ the $a$-section of $R_i$, for each $a \in \cA$;
and by $R_i(t) : \cA \to \bR$ the $t$-section of $R_i$, for each $t \in \cT$.
We also set $R \equiv (R_i)_{i \in \cN}$.
\end{itemize}
\end{definition}

We will denote by $t = (t_1,t_2,\dots,t_n)$ a random type vector,
so that $t_i$ is the random type of Player~$i$.

For every $i\in\mathcal{N}$, denote by $\mathcal{X}_i \equiv \Delta(\cA_i)$ the set of mixed actions of Player~$i$.
A (behavior) \emph{strategy} of Player~$i$ is a measurable function $s_i : \cT_i \to \cX_i$. 
This definition indicates the interpretation of the type spaces:
each player $i \in \cN$ knows her own type,
and is not told the types of the other players.
Denote by $\cS_i$ the set of strategies of Player~$i$,
so that $\cS \equiv \prod_{i \in \cN} \cS_i$ is the set of all strategy profiles.

Every strategy profile $s \in \cS$ induces a probability distribution  over $\cT \times \cA$, denoted ${\mathbb P}_s$, which satisfies
\begin{equation}\label{eq: product form}
{\mathbb P}_s(T \times B_1 \times \dots \times B_n) 
= \int_{T} \prod_{i \in \cN} s_i(t_i)(B_i) \mathrm{d} {\mathbb P}(t),
\end{equation}
for every Borel set $T \subseteq \cT$ and every measurable sets $B_i \subseteq \cA_i$ for $i \in \cN$.
Denote by $E_s$ the corresponding expectation operator.
For every $s\in\mathcal{S}$, whenever the expectation of $R_i$ with respect to ${\mathbb P}_s$ is well defined,
Player~$i$'s expected payoff under the strategy profile $s$ is the real number 
\[ U_i(s) \equiv {\mathbb E}_s[R_i],\]
and her conditional payoff given her information is the random variable (determined by $t_i$)%
\footnote{Here for simplicity we abuse notation.
Formally, $U_i(s \mid t_i)$ is the conditional expectation 
of $R_i$ given the sigma-field $\cB(\cT_i) \times \prod_{j \neq i} \{\cT_{j},\emptyset\}$.}
\[ U_i(s\mid t_i) \equiv {\mathbb E}_s[R_i \mid t_i].\]
The solution concept we will concentrate on in this paper is Bayesian $\ep$-equilibrium.

\begin{definition}[Bayesian $\ep$-equilibrium]
    Given $\ep \geq0$,
a strategy profile $s^*\in\mathcal{S}$ is
   a \emph{Bayesian $\ep $-equilibrium} if for every player $i \in \cN$ and every strategy $s_i \in \mathcal{S}_i$,
    \begin{equation}
    \label{equ:bayesian:eq}
        U_i(s_i,s^*_{-i} \mid t_i)\leq U_i(s^* \mid t_i)+\ep, \ \ {\mathbb P}\text{-a.s.}
    \end{equation}
\end{definition}

\begin{remark}[The relation between Bayesian and Harsanyi equilibria]\normalfont
\label{remark:bayesian:harsanyi} 
Given $\ep \geq0$,
a strategy profile $s^*\in\mathcal{S}$ is
   a \emph{Harsanyi  $\ep $-equilibrium} if 
    for every player $i \in \cN$ and every strategy $s_i \in \mathcal{S}_i$,
    \begin{equation}
        U_i\left(s_i,s^*_{-i}\right)\leq U_i\left(s^*\right)+\ep.
        \end{equation}
Standard conditioning implies that every Bayesian $\ep$-equilibrium is a Harsanyi $\ep$-equilibrium. 
 When  $\left(\cT,\mathcal{B}\left(\mathcal{T}\right),{\mathbb P}\right)$ is complete, 
a Harsanyi $0$-equilibrium is also a Bayesian $0$-equilibrium. 
When $\varepsilon>0$, a Harsanyi $\ep$-equilibrium is not necessarily a Bayesian $\ep$-equilibrium.
Indeed, modifying a Bayesian $0$-equilibrium on a set of types of sufficiently small measure arbitrarily will generically yield such an example.
In fact, the example provided by Hellman~\cite{Hellman2014}
shows that when $\ep>0$ is sufficiently small, Harsanyi $\ep$-equilibria may exist while Bayesian $\ep$-equilibria do not.
See Hellman and Levy~\cite{Hellman2021} for further discussion on this issue.
\end{remark}

Harsanyi \cite{Harsanyi1967} first presented the model of Bayesian games,
and proved that when all sets that define the game are finite, a Bayesian \color{black} equilibrium exists.
Milgrom and Weber \cite{MW1985} studied Bayesian games with general type spaces,
and 
proved that a Harsanyi equilibrium exists in distributional strategies when ${\mathbb P}$ is absolutely continuous w.r.t.~the product of its marginals,
that is, 
w.r.t.~${\mathbb P}_1 \otimes {\mathbb P}_2 \otimes \dots \otimes {\mathbb P}_n$,
where ${\mathbb P}_i$ is defined by 
${\mathbb P}_i(B_i)\equiv {\mathbb P}\left(\left(\prod_{j\neq i}\mathcal{T}_j\right) \times B_i\right)$ for every $i\in\mathcal{N}$ and every $B_i \in \cB(\cT_i)$. As mentioned in the introduction,
Simon \cite{Simon2003} and Hellman \cite{Hellman2014} 
(resp., Simon and Tomkowicz \cite{Simon2018, Simon2023}) provided examples of  Bayesian games with finite action spaces and no Bayesian $\ep$-equilibria (resp., no Harsanyi $\ep$-equilibria), for $\ep > 0$ sufficiently small. Additional sufficient conditions on the parameters of the game that ensure the existence of a $0$-equilibrium have already been reviewed in the introduction.

In this paper we concentrate on Bayesian games where the information of the players is nested.

\begin{definition}[Nested information]
\label{def:nested}
We say that the information of the players in a Bayesian game is \emph{nested} if $t_{i+1}$ is determined by $t_i$,
for every $i=1,2,\ldots,n-1$; that is, 
if for each $i < n$ there is a mapping $\kappa_i : \cT_i \to \cT_{i+1}$ such that 
\begin{equation}
{\mathbb P}\bigl(t_{i+1} = \kappa_i(t_i), \ \ \ \hbox{for every } 1\leq i<n\bigr)  =1.    
\label{equ:nested}
\end{equation}
\end{definition}

\begin{remark}\normalfont[Nested information w.r.t.~a general ordering of the players]
More generally, we could say that the information of the players is nested if the 
condition in Definition~\ref{def:nested}
holds after a permutation of the players. For simplicity we will always assume the ordering of the players in Definition~\ref{def:nested}, with Player 1 the most knowledgeable, followed by Player 2, and so forth.
\end{remark}

\begin{remark}[Players possessing the same information]\normalfont
Note that the definition allows for two or more players to possess the same information.
Indeed, players $i$ and $i+1$ have the same information if the function $\kappa_i$ in Eq.~\eqref{equ:nested} is a bijection.
\end{remark}

\begin{remark}[${\mathbb P}$-a.s.~versus everywhere in Eq.~\eqref{equ:nested}]
\label{rem:nested_info_everywhere}\normalfont
Nested information requires that 
$t_{i+1} = \kappa_i(t_i)$, ${\mathbb P}$-a.s.~and not everywhere.
This distinction is irrelevant for our purposes.
Indeed,  let ${\mathbb Q}$ be the measure on $\prod_{i \in I} \cT_i$ whose marginal on $\cT_1$ coincides with that under ${\mathbb P}$,  and that is determined by its marginal on $\cT_1$ and the functions $\kappa_1,\ldots,\kappa_{i-1}$. 
We then have ${\mathbb P}={\mathbb Q}$.
\end{remark}

\begin{remark}[Nested information and absolute continuity of information]
\normalfont
We here show that a Bayesian game  with nested information may not satisfy the requirement of absolute continuity of information structure, as studied by 
Milgrom and Weber \cite{MW1985}.
Indeed, suppose that $\cT_i = [0,1]$ for each $i \in \cN$,
and ${\mathbb P}$ is the uniform distribution on the diagonal $\{t_1 = t_2 = \dots = t_n\}$.
The resulting measure $\bigotimes_{i=1}^n {\mathbb P}_i$ is the Lebesgue measure on $[0,1]^n$,
and  hence  ${\mathbb P}$ is concentrated on a set of $(\bigotimes_{i=1}^n {\mathbb P}_i)$-measure zero. 
Therefore, this information structure does not satisfy the absolute continuity condition of \cite{MW1985}, yet the players have nested information.

Since Bayesian games that satisfy absolute continuity of information structures do not necessarily have nested information 
  (see, e.g., Example~2 in \cite{MW1985}),
it follows that 
nested information 
 is unrelated to absolute continuity of information structure.
\end{remark}

Below we will study Bayesian games that satisfy (a subset of) the following regularity conditions:
\begin{description}

\item [\textbf{A1}:] (Discrete action spaces and bounded payoffs\color{black}) $\mathcal{A}_i$ is finite and $R_i$ is bounded, for every $i\in\mathcal{N}$; that is,
there is $M > 0$ such that $|R_i(t,a)| \leq M$ for every $i \in \cI$, $t \in \cT$, and $a \in \cA$. 

     \item[A2:] (Polish type spaces) For each $i\in\mathcal{N}$, the type space $\mathcal{T}_i$ is Polish.%
\footnote{Recall that a \emph{Polish space} is a separable completely metrizable topological space; that is, a space homeomorphic to a complete metric space that has a countable dense subset.}
\end{description}


The main result of the present work is the following.
\color{black}

\begin{theorem}[Existence of 0-equilibrium]
\label{thm:0}$\text{ }$
Every Bayesian game with nested information that satisfies Conditions \textbf{A1} and \textbf{A2} admits a Bayesian $0$-equilibrium.
\end{theorem}
\color{black}



\begin{remark}[Tightness of the conditions in Theorem~1]
\normalfont 
The finiteness of the sets of actions,
or at least some compactness condition, 
is required to ensure the existence of an equilibrium even when the game has complete information.
The boundedness of the payoffs is required to ensure that the expected payoff of the players is well defined.
The example of Hellman \cite{Hellman2014},
which does not admit a Bayesian $\ep$-equilibrium for all $\ep > 0$ sufficiently small,
satisfies \textbf{A1} and \textbf{A2} and does not have nested information. 
In particular, \textbf{A1} and the assumption that information is nested cannot be dispensed with in Theorem~\ref{thm:0}.
We do not know whether \textbf{A2} is essential to prove Theorem~\ref{thm:0}.
In Section~\ref{section:discussion} we discuss possible weakening of \textbf{A1}.
\end{remark}
\color{black}
\begin{remark}[Games with two players and nested information]
\normalfont Example 3.3 in Stinchcombe and White \cite{Stinchcombe1992} implies that in the presence of two players who have nested information (and concave payoffs in their actions),  under Conditions \textbf{A1}  and \textbf{A2}, 
there exists a Harsanyi $0$-equilibrium.  As mentioned in Remark~\ref{remark:bayesian:harsanyi},
when $(\cT,\cB(\cT), {\mathbb P})$ is complete,
this implies the existence of a Bayesian 0-equilibrium.
Thus, 
Theorem \ref{thm:0} extends the result of Stinchcombe and White \cite{Stinchcombe1992}
to any number of players.
\color{black}
\end{remark}
As mentioned in the introduction,
when the game does not have nested information,
Theorems~\ref{thm:0} may fail.






\section{The Driving Force Behind our Proofs}\label{sec:driving}

This section gives heuristic explanations of our methodology. The proof is divided into two main parts: 
The first establishes the existence of $\varepsilon$-equilibria, while the second uses the Measurable ``Measurable Choice'' Theorem to construct an appropriate limit of approximate equilibria which constitute an exact equilibrium. As remarked earlier, we point out that it is a well-known problem that in games with a continuum of states, limits of approximate equilibria do not, in general, naturally induce exact equilibria, as the limiting process can induce correlation; for an elaboration on this point, see Stinchcombe \cite{Stinchcombe2011}.

\paragraph*{Belief Hierarchies}
In Bayesian games, 
to determine her action,
on top of her own information on the players' types,
a player needs to take into account also:
\begin{enumerate}
    \item[-] her information on the information the other players have on the players' types,
    \item[-] her information on the information each Player $i$ has on the information each Player $j  \neq i$ has on the players' types,
    \item[-] her information on the information each Player $i$ has on the information each Player $j  \neq i$ has on the information each Player $k \neq j$ has on the players' types,
    \item[-] etc. 
\end{enumerate}
In general,
the information encapsulated in higher levels cannot be deduced 
from the information encapsulated in lower levels. 
This gives rise to an \emph{infinite belief hierarchy},
which typically depends on the state of the world.

The belief hierarchy of a player identifies the states of the world which should be taken into account when determining the player's action.
In fact, the belief hierarchies of the players divide the set of states of the world into disjoint subsets,
called \emph{minimal belief subspaces},
such that the states in each subspace are closed, in the sense that when the actual state of the world is in a given subspace,
only states in that subspace need to be considered to determine the players' actions in equilibrium.
As showed by Simon \cite{Simon2003},
even if the game restricted to each of the minimal belief subspaces has an equilibrium,
the amalgamation of these equilibria need not be measurable.
The example provided by Hellman \cite{Hellman2014}
does not possess an $\ep$-equilibrium when $\ep > 0$ is sufficiently small.

When information is nested, the infinite belief hierarchies can be deduced from its first $n$ levels,
where $n$ is the number of players. Indeed, if, say, there are two players and Player~1 is more informed than Player~2,
then Player~2's infinite belief hierarchy can be deduced from her own information on the players' types and her information on Player~1's information on the players' types;
and Player~1's infinite belief hierarchy can be deduced from her own information on the players' types, and Player~2's information on the players' types and on Player~1's information on the players' types. For example, the next level in the belief hierarchy of Player~2 corresponds to Player~2's information on Player~1's information on Player~2's information on the players' types,
which coincide with Player~2's information on the players' types.
Similarly, 
the next level in the belief hierarchy of Player~1 corresponds to Player~1's information on Player~2's information on Player~1's information on the players' types,
which coincide with Player~2's information on Player~1's information on the players' types.
\color{black}
Thus, when information is nested, there is no need to consider infinite belief hierarchies,
and the game has a finite structure.

\paragraph*{$\varepsilon$-Equilibrium: Approximating Belief Hierchies}

The observation made in the previous paragraph leads us to define a finite approximation of the belief hierarchy when information is nested, which is useful in proving the existence of a Bayesian $\ep$-equilibrium. Let us explain this approximation.

Assume that the action spaces are finite and suppose again that there are two players, where Player~1 is more informed than Player~2.
When Player~1 observes the type profile realization $t=(t_1,t_2)$, Player~1 has a belief over the matrix game that is being played.
As at present we are interested in an $\ep$-equilibrium, we can assume that the collection $\cR$ of all possible matrix games is finite.
Fixing $\delta > 0$, we can choose a $\delta$-dense subset $\cD_1$ of 
the set $\Delta(\cR)$ of probability distributions over $\cR$,
and approximate Player~$1$'s belief at $t$ by the closest point in $\cD_1$, denoted $\varphi_1(t)$.
We can then consider the mapping $\psi_1$ that assigns to each $t$ the pair $(\varphi_1(t),R(t))$, namely, Player~1's approximated belief at $t$ and the payoff matrix at $t$,
and consider the distribution of this vector given $t_2$ which is Player~2's information at $t$.
Since the mapping $\varphi_1$ takes only finitely many values, 
and the number of possible payoff matrices is finite,
the range of $\psi_1$ is finite dimensional, and hence can be in turn $\delta$-approximated by a mapping $\varphi_2$ with finitely many values; 
the range of $\varphi_2$ is a $\delta$-dense subset $\cD_2$ of $\Delta(\cD_1 \times \cR)$.
The mapping $\varphi_2$ represents the approximated information Player~2 has at $t$,
on both the payoff matrix and on Player~1's information on the payoff matrix.
Finally, we say that Player~1's approximated belief is composed by the pair $(\varphi_1,\varphi_2)$,
and Player~2's approximated belief is composed solely of $\varphi_2$.

Since the approximating information divides the state space into finitely many sets, 
the resulting game admits a Bayesian 0-equilibrium.
The properties of the approximation then imply that this Bayesian 0-equilibrium is a Bayesian $\ep$-equilibrium, provided $\delta$ is sufficiently small. 

 \paragraph*{Exact Equilibrium: Using the Measurable ``Measurable Choice'' Theorem.}

To construct a Bayesian 0-equilibrium,
we would like to consider an accumulation point of a sequence of Bayesian $\frac{1}{n}$-equilibria as $n \rightarrow \infty$.
Unfortunately, as mentioned 
above, 
when the type space is general, a limit of strategy profiles may be a correlated strategy profile. 
To 
overcome this difficulty,
we use iteratively an extension of Mertens' \cite{Me87} Measurable ``Measurable Choice'' Theorem.
This is the first time we are aware of where this theorem is used in the study of Bayesian games.%

To illustrate the construction,
let us start with
three players. There is a single coordinate of knowledge, which Players 1 and 2 know,
and
Player 3 does not know; i.e., $\Ts_1 = \Ts_2$, while $\Ts_3$ is a singleton $\emptyset$; and the prior ${\mathbb P}$ is concentrated on the set $\{ t_1 = t_2\}$, i.e., $\kappa_1(t_1) = t_1$, and $\kappa_2(t_2)= \emptyset$. 
For simplicity, assume 
each player has two actions, $L$ and $R$. 
In this
framework, equilibrium existence had been previously an open question.

Fix a sequence of approximate equilibria $(s^n)_{n \in \bN}$, where $s^n$ is a Bayesian $\frac{1}{n}$-equilibrium for every $n\in \bN$. 
A natural approach is the following: 
First, consider a sequence of indices $(n_k)_{k \in \bN}$ such that $s_3^{n_k}(\emptyset)$ converges to some limit, 
denoted  $s_3^*(\emptyset)$,
which we  
designate as a candidate for 
the equilibrium strategy for Player 3. 
Next, for each possible $t_1 \in \Ts_1 = \Ts_2$ (we treat $s_2$ as a function of $t_1$ as well, as a.s. $t_1=t_2$), let $s_{1,2}^*(t_1)= (s^*_1(t_1),s^*_2(t_1))$, the 
candidate for the
equilibrium strategy of Players 1 and 2, be 
an accumulation
point of $\big( s_{1,2}^{n_k}(t_1) \big)_{k \in \bN}$,
where $s_{1,2}^{n_k}(t_1) = (s_{1}^{n_k}(t_1),s_{2}^{n_k}(t_1))$;
\color{black}
we would make the selections so that $s_{1,2}^*$ is a measurable function.

The problem with allowing $s_{1,2}^*(t_1)$ to be any 
accumulation
point of $(s_{1,2}^{n_k}(t_1))_{k \in \bN}$ is that this may result in the expected payoffs to Player~1's actions not being able to support $s^*_3(\emptyset)$ as an equilibrium. 
For instance, $s^*_3(\emptyset)$ may mix between $L$ and $R$, but the expected payoffs for $L$ and for $R$ assuming Players $1$ and $2$ use $s_{1,2}^*$, may be different, i.e., we may have $U_3(L, s^*_{1,2} \mid \emptyset ) \neq U_3(R, s^*_{1,2} \mid \emptyset )$. 
To avoid this problem, we need $(n_k)_{k \in \bN}$ to be such that not only does $(s_3^{n_k}(\emptyset))_{k \in \bN}$ converge, but so do the sequences $\big(U_3(L, s^{n_k}_{1,2} \mid \emptyset )\big)_{k \in \bN}$ and $\big( U_3(R, s^{n_k}_{1,2} \mid \emptyset ) \big)_{k \in \bN}$.
Denote the corresponding limits by
$\rho_3[L]$ and $\rho_3[R]$. 
The selection of $s_{1,2}^*(t_1)$ over $t_1 \in \Ts_1$ is done among the 
accumulation
points of $(s_{1,2}^{n_k}(t_1))_{k \in \bN}$ so that, in the aggregate, $U_3(L, s^*_{1,2} \mid \emptyset ) = U_3(R, s^*_{1,2} \mid \emptyset )$. 
The resulting selections together with $s^*_3$ can be shown to be an equilibrium.

To be more precise, and of particular relevance as the examples become more complex and we look towards a general technique, is that we do \emph{not} actually fix a specific subsequence of indices $(n_k)_{k \in \bN}$; what we do is stipulate that in the latter stage, when we select among 
accumulation
points of $(s_{1,2}^n(t_1))_{n \in \bN}$, we only select among limits of subsequences whose indices $(n_k)_{k \in \bN}$ ensure not only the convergence $s_3^{n_k}(\emptyset) \rightarrow s_3^*(\emptyset)$, but \emph{also} the convergence $U_3(L, s^{n_k}_{1,2} \mid \emptyset ) \rightarrow \rho_3[L]$ and $U_3(R, s^{n_k}_{1,2} \mid \emptyset ) \rightarrow \rho_3[R]$. There may be many such subsequences $(n_k)_{k \in \bN}$, and we do not select a particular one. 
Rather, we select the limits we desire, and make sure that future selections are consistent with these limits. 

Now, let us spice up the example. Suppose there are 
four players:
Player 4 knows nothing, Player 3 knows something,
and Players 1 and 2 know everything. 
Formally, $\Ts_1 = \Ts_2$ and a.s.~$t_1=t_2$ (that is, $\kappa_1(t_1)=t_1$), $\Ts_3$ is non-trivial (there is some $\kappa_2:\Ts_2 \rightarrow \Ts_3$), while $\Ts_4$ is a singleton $\emptyset$ ($\kappa_3(t_3) = \emptyset$). 
Again, assume players have two actions $L$ and $R$. We fix a sequence of approximate equilibria $(s^n)_{n \in \bN}$ with $s^n$ being a Bayesian $\frac{1}{n}$-equilibrium. 
Once again, starting with the least knowledgeable player, we select some 
accumulation
point of the triplet $\bigl( s_4^n(\emptyset), U_4(L, s^{n}_{-4} \mid \emptyset ), U_4(R, s^{n}_{-4} \mid \emptyset ) \bigr)_{n \in \bN}$ to $\big( s^*_4(\emptyset),\rho_4[L],\rho_4[R]\big)$.

Now, move on to the second-least informed player, Player 3, who has partial knowledge. We need to take care that the construction of $s^*_3$ would leave open the door for construction of 
$s^*_{1,2}$
with $\rho_4[L] = U_4( L, s^*_{-4}   \mid \emptyset)$ and $\rho_4[R] = U_4( R, s^*_{-4}   \mid \emptyset)$. 
To this end, for each $t_3 \in \Ts_3$, we choose an 
accumulation point that is
consistent with the convergence we have already established of the $9$-tuple of $s_3^n(t_3)$ and $U_j(a_3,a_4,s^n_{1,2} \mid t_3)$ for each $j=3,4$ and each 
$a_3,a_4 \in \{L,R\}$,
where $U_4(\cdot \mid t_3)$ means $U_4(\cdot \mid \kappa_3(t_3))$. 
That is, we keep track not only of Player 3's strategy, but also of the expected payoffs to Player 3 \emph{and} Player 4 for each action \emph{profile} of these players. 
Denote a chosen 
accumulation 
point of this $9$-tuple as $s_3^*(t_3)$ and $\rho_3[j,a_3,a_4](t_3)$ for $j=3,4$ and
$a_3,a_4 \in \{L,R\}$.
Like in the previous example, it is not sufficient to choose any 
accumulation 
points across different $t_3 \in \Ts_3$; we must take care that these 
accumulation 
points are chosen so that if Player $4$ imagines the setup in which it is only her and Player $3$, and payoffs are given by $\rho_3$, then her expected payoff for an action 
$a_4 \in \{L,R\}$
when Player 3 uses $s_3^*$ is precisely $\rho_4[a_4]$, i.e., for 
$a_4 \in \{L,R\}$,
$\rho_4[a_4] = \int_{\Ts_3} \rho_3[4,s_3^*(t_3),a_4] {\mathbb P}(dt_3) $.
This establishes a certain consistency between $\rho_4$ and $\rho_3$ with $s_3^*$. 

In the next step, 
consider
Player 1 and 2, 
who have 
full
knowledge
because $t_3 = \kappa_2(t_1)$ and $\kappa_3(t_3) = \emptyset$.
We need for each $t_1$ to choose an 
accumulation 
 point of $(s_{1,2}^{n}(t_1))_{n \in \bN}$ 
 that is
 consistent with previous selections for this particular $\kappa_2(t_1)$, i.e., along subsequences of indices which give the chosen 
accumulation 
points $s_4^*(\emptyset)$, $s_3^*(\kappa_2(t_1))$, and $\rho_3[j,a_3,a_4]$ of $s_4^n(\emptyset)$, $s_3^n(\kappa_2(t_1) )$, and $U_j( a_3,a_4,s^{n}_{1,2}(t_1) \mid \kappa_2(t_1) )$ for $j=3,4$
and $a_3,a_4 \in \{L,R\}$,
respectively.
Furthermore, we need that the selection be done across all $t_1 \in \Ts_1 = \Ts_2$, so that, for each $t_3 \in \Ts$, the expected payoffs to Player $j=3,4$ under 
$s^*_{1,2}$ 
for any pair of actions 
$a_3,a_4\in\{L,R\}$
they may play, $U_j(a_3,a_4,s^*_{1,2} \mid t_3)$, agrees with $\rho_3[j,a_3,a_4](t_3)$. 
To do it for all $t_3 \in \Ts_3$ in parallel in a measurable fashion, we
appeal to 
the
Measurable ``Measurable Choice" Theorem of \cite{Me87}.

The proof in general is a formalization of the ideas above, although we note that we go player-by-player, without `bunching together' players who have identical information. 

\color{black}

\section{Proof of Theorem~\ref{thm:0}}
\label{section:proof}

The proof of Theorem~\ref{thm:0} consists of two steps.
In Section \ref{subsec: preliminary benchmarks}, we prove the existence of a Bayesian $\ep$-equilibrium under Condition \textbf{A1}, for every $\ep>0$.
In Section \ref{subsec: proof 1} we use this result to prove the existence of a Bayesian 0-equilibrium under both \textbf{A1} and \textbf{A2}. 
\color{black}

\subsection{Existence of a Bayesian $\ep$-equilibrium under \textbf{A1}}\label{subsec: preliminary benchmarks}

In this section we prove the existence of a Bayesian $\ep$-equilibrium under \textbf{A1}, for every $\ep>0$. 
To this end, we approximate
the information structure in a way that
\color{black}
is related to the approximation used by Shmaya and Solan \cite{Shmaya2004}. 
In Section  \ref{subsec: delta-approximation} we review the notion of $\delta$-approximation defined over a compact set in a metric space. 
This notion is applied in Section \ref{subsec: information} to approximate the information structure of the players in $\Gamma$ (under the nested information assumption and  \textbf{A1}). 
We then define a new game, which is identical to $\Gamma$ except that the information of the players is replaced by its approximation. 
In Section \ref{subsec: preliminary step}, we show that this approximated game has a Bayesian $0$-equilibrium,
and that this Bayesian 0-equilibrium is a Bayesian $\epsilon$-equilibrium of the original game $\Gamma$. 

\color{black}

\subsubsection{\label{subsec: delta-approximation} $\delta$-approximations}

We begin by recalling definitions related to dense subsets.

\begin{definition}[$\delta$-dense subset]\label{def: delta dense}
Let $U$ be a set in a metric space $\left(\mathcal{M},d\right)$, and fix $\delta > 0$.
A set $V\subseteq U$ is $\delta$-dense in $U$
if 
for every $u \in U$ there is $v \in V$ such that $d(u,v) < \delta$.
\end{definition}

When $U$ is contained in a compact set,
the $\delta$-dense set $V$ that we will consider will be implicitly assumed to be finite.
In this case,
there exists a measurable mapping $v_\delta:U\rightarrow V$ such that $d\left(u,v_\delta(u)\right) < \delta$ for every $u\in U$.
For every $u \in U$, the image $v_\delta(u)$ is called a \emph{$\delta$-approximation} of $u$ (by $V$). 
When $\Omega$ is a measurable space and $y : \Omega \to U$ is measurable, the mapping $v_\delta(y(\cdot)) : \Omega \to V$ will be a \emph{measurable} $\delta$-approximation of $y$.

For every random vector 
$Z : \cT \to \bR^d$ with finite range $\cZ$,
and every $i \in \cN$,
denote the conditional distribution of $Z$ given $t_i$ by 
\begin{equation*}
{\mathbb P}\left(Z\mid t_i\right)\equiv({\mathbb P}(Z=z\mid t_i))_{z\in\mathcal{Z}}.
\end{equation*} 
Thus, ${\mathbb P}\left(Z\mid t_i\right)$ is a  vector determined by $t_i$ and contained in the $\left(|\mathcal{Z}|-1\right)$-dimensional simplex. 
Since the $\left(|\mathcal{Z}|-1\right)$-dimensional simplex is compact,
there exists a $\delta$-approximation $\varphi(\cdot)(t_i)$ which belongs to the $\left(|\mathcal{Z}|-1\right)$-simplex and satisfies
\[
    \sum_{z\in \cZ} \bigl|{\mathbb P}\left(Z=z \mid t_i\right)-\varphi(z)(t_i)\bigr|<\delta\,,\  {\mathbb P}\text{-a.s.}
\]
\color{black}
\subsubsection{A finite approximation of the information structure}
\label{subsec: information}

In this section we present a finite approximation of the information structure,
which is suited to games where information is nested.

Fix a Bayesian game with nested information $\Gamma$ that satisfies 
\textbf{A1}.
That is, the action spaces $(\cA_i)_{i \in \cN}$ are finite and the payoff function is bounded by $M$. 
Since in this section we are interested in proving the existence of a Bayesian $\ep$-equilibrium,
we can assume w.l.o.g.~that the range 
of the payoff function $R$ is finite.
Namely, there is a finite collection $\cR$ of functions from $\cA$ to $[-M,M]^{n}$ such that 
for \emph{every} $t \in \cT$, the function $R(t) = (R_i(t))_{i \in \cN}$ is an element of $\cR$.

For every 
$i \in \cN$, 
every strategy $s_i \in \cS_i$, and every action $a_i \in \cA_i$, denote by $s_i(a_i)(t_i)$ the probability that Player~$i$ selects the action $a_i$ under $s_i$ when her type is $t_i$.
For every strategy profile $s \in \mathcal{S}$ and every action profile $a \in \cA$, 
the probability under $s$ that $a$ is selected by the players is
the random variable $p_s(a)$ defined by
\begin{equation*}
    p_s(a)(t)\equiv\prod_{i=1}^n s_i(a_i)(t_i), \ \ \forall t \in \cT.
\end{equation*}
Define
\[ p_{s_{-i}}(a_{-i})(t_{-i})\equiv \prod_{j \neq i} s_j(a_j)(t_j), \ \ \forall i \in \cN, s \in \cS, a \in \cA,  t \in \mathcal{T}.\]

We are now going to recursively define approximations of the information that the players have.
Fix $\delta>0$.
Let $\psi_1 : \mathcal{T} \to\mathcal{R}$ be the random vector defined by
\[\psi_1(t) \equiv R(t), \ \ \forall t \in \mathcal{T}. \]
Denote $r\equiv\left|\mathcal{R}\right|$, so that ${\mathbb P}(\psi_1 \mid t_1)$ is in the $(r-1)$-dimensional standard simplex, ${\mathbb P}$-a.s. 
Equip $\mathbb{R}^r$ with the $L_1$-norm. 
Let $\varphi_1\equiv\varphi_1(\cdot)(t_1)$ be a measurable 
$\delta$-approximation of ${\mathbb P}(\psi_1 \mid t_1)$, 
so that $\varphi_1$ has a finite range and
\[
    \sum_{z\in \cR} \bigl|{\mathbb P}\left(\psi_1=z \mid t_1\right)-\varphi_1(z)(t_1)\bigr|<\delta,\ \ {\mathbb P}\text{-a.s.}
\]

For $i \in \mathcal{N} \setminus \{1\}$,
suppose we have already defined random vectors $\psi_1,\varphi_1,\dots,\psi_{i-1},\varphi_{i-1}$, all with finite ranges, where $\psi_j:T_j \rightarrow \prod_{k=1}^{j-1} \cD_k  \times \cR$ and $\varphi_j:T_j \rightarrow \Delta(\prod_{k=1}^{j-1} \cD_k  \times \cR) \subseteq \mathbb{R}^{\prod_{j=1}^{i-1} |\cD_j| \times r}$, where for each $1 \leq j \leq i-1$,  $\mathcal{D}_j \subseteq \Delta(\prod_{k=1}^{j-1} \cD_k  \times \cR)$ is the range of $\varphi_j$. 
Define 
\begin{equation}\label{eq: random object}
\psi_i(t)\equiv\bigl(\varphi_1(t_1),\varphi_2(t_2),\ldots,\varphi_{i-1}(t_{i-1}), R(t)\bigr)  \in \prod_{j=1}^{i-1} \cD_j  \times \cR , \ \ {\mathbb P}\text{-a.s.},
\end{equation}
which is a random vector with a discrete distribution. 
The range of $\psi_i$ is finite and contains at most $\prod_{j=1}^{i-1} |\cD_j| \times r$ elements.
Equip $\mathbb{R}^{\prod_{j=1}^{i-1} |\cD_j| \times r}$ with the $L_1$-norm, so the $\left(\prod_{j=1}^{i-1} |\cD_j| \times r-1\color{black}\right)$-dimensional simplex is compact.
Let $\varphi_i$ be a measurable $\delta$-approximation
 of ${\mathbb P}\left( \psi_i \mid t_i\right)$,
and hence
\begin{equation}\label{equ:approx:1}
    \sum_{z\in\prod_{j=1}^{i-1}\cD_j\times\mathcal{R}}\bigl|{\mathbb P}\left(\psi_i=z\mid t_i\right)-\varphi_i(z)(t_i)\bigr|<\delta,\ \ {\mathbb P}\text{-a.s.}
\end{equation}
Thus, the range of 
$\varphi_i$ is contained in 
a 
finite $\delta$-dense subset of
the $\left(\prod_{j=1}^{i-1} |\cD_j| \times r-1\color{black}\right)$-dimensional simplex.

The random vectors $\varphi_1,\varphi_2,\ldots,\varphi_n$ have an intuitive interpretation. 
\begin{enumerate}
    \item[-] $\varphi_1(t_1)$ is an approximation of the information that Player~1 has on the payoffs at $t$. 

    \item[-] $\varphi_2(t_2)$ is an approximation of the information that Player~2 has at $t$ on the payoffs and on the approximated information that Player~1 has on the payoffs.

    \item[-] $\varphi_3(t_3)$ is an approximation of the information that Player~3 has at $t$ \color{black} on (i) the payoffs, (ii) the approximated information that Player~1 has on the payoffs, and (iii) the approximated information that Player~2 has on the payoffs and on the approximated information that Player~1 has on the payoffs.
    \item[-] Etc. 
\end{enumerate}

For every $i\in\mathcal{N}$ and every Borel function $f:\prod_{j=1}^{i-1}\cD_j\times\mathcal{R}\rightarrow\mathbb{R}$, 
let ${\mathbb E}_{\varphi_i}\left[f\right]$ be the random variable given by
\begin{equation*}
{\mathbb E}_{\varphi_i}\left[f\right](t_i)\equiv\sum_{z\in\prod_{j=1}^{i-1}\cD_j\times\mathcal{R}}f(z)\cdot\varphi_i(z)(t_i), \ \ {\mathbb P}\text{-a.s.}
\end{equation*}

The next lemma relates ${\mathbb E}_{\varphi_i}\left[f\right]$
to the conditional expectation of $f$ given $t_i$.

\begin{lemma}\label{lemma: approximation}
    Fix $i\in\mathcal{N}$ and let $f:\prod_{j=1}^{i-1}\cD_j\times\mathcal{R}\rightarrow\mathbb{R}$ be a real-valued Borel function which is bounded by $M>0$. Then,
    \begin{equation*}
        \bigl|{\mathbb E}\left[f(\psi_i)\mid t_i\right]-{\mathbb E}_{\varphi_i}\left[f\right](t_i)\bigr|<M\delta, \ \ {\mathbb P}\text{-a.s.}
    \end{equation*}
\end{lemma}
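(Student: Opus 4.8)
The plan is to exploit the fact that $\psi_i$ is a discrete random vector with finite support, which collapses the conditional expectation into a finite sum and reduces the claim to a routine $L^1$--$L^\infty$ duality estimate. Write $\cZ_i \equiv \prod_{j=1}^{i-1}\cD_j\times\cR$ for the finite index set over which both $E[f(\psi_i)\mid\cF_i]$ and $E_{\varphi_i}[f]$ are summed. Since $\psi_i$ takes only finitely many values, one has $f(\psi_i)=\sum_{z\in\cZ_i} f(z)\,\mathbf{1}_{\{\psi_i=z\}}$, a finite sum.

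First I would apply linearity of conditional expectation to this finite sum. Because $f$ is bounded by $M$, the random variable $f(\psi_i)$ is integrable, and
$$E[f(\psi_i)\mid\cF_i](\omega)=\sum_{z\in\cZ_i} f(z)\,P(\psi_i=z\mid\cF_i)(\omega),\quad P\text{-a.s.}$$
By the definition of $E_{\varphi_i}[f]$, the same test function $f$ is integrated against $\varphi_i$ rather than against the true conditional law, so that $E_{\varphi_i}[f](\omega)=\sum_{z\in\cZ_i} f(z)\,\varphi_i(z)(\omega)$.

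Next I would subtract these two expressions term by term and bound by the triangle inequality, pulling out the uniform bound $|f(z)|\le M$:
$$\bigl|E[f(\psi_i)\mid\cF_i](\omega)-E_{\varphi_i}[f](\omega)\bigr|\le M\sum_{z\in\cZ_i}\bigl|P(\psi_i=z\mid\cF_i)(\omega)-\varphi_i(z)(\omega)\bigr|.$$
The sum on the right is exactly the quantity controlled by the defining $\delta$-approximation inequality~\eqref{equ:approx:1}, which bounds it by $\delta$ outside a $P$-null set. Intersecting this co-null set with the co-null set on which the displayed identity for $E[f(\psi_i)\mid\cF_i]$ holds yields the claim $P$-a.s.

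I do not expect any genuine obstacle here: the content of the lemma is precisely that $L^1$-closeness of the approximate conditional distribution $\varphi_i$ to the true conditional distribution $P(\psi_i\mid\cF_i)$ transfers, against any bounded Borel test function, into uniform closeness of the resulting conditional expectations. The only points requiring a line of care are the finiteness of the support of $\psi_i$ (which justifies writing the conditional expectation as a finite sum and invoking linearity) and the bookkeeping of $P$-null sets, since each identity and bound involved holds only almost surely.
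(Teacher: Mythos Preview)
Your proposal is correct and follows essentially the same approach as the paper: expand both quantities as finite sums over $\cZ_i$, subtract term by term, pull out the bound $|f(z)|\le M$, and invoke the $\delta$-approximation inequality~\eqref{equ:approx:1}. The paper's proof is terser---it does not spell out the representation of $E[f(\psi_i)\mid\cF_i]$ as a finite sum nor the null-set bookkeeping---but the argument is identical.
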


\begin{proof}
The claim holds since ${\mathbb P}$-a.s.~we have
\begin{align*}
\bigl|{\mathbb E}\left[f(\psi_i)\mid t_i\right]-{\mathbb E}_{\varphi_i}\left[f\right](t_i)\bigr|&
\leq\sum_{z\in\prod_{j=1}^{i-1}\cD_j\times\mathcal{R}}\left|f(z)\right|\cdot\bigl|{\mathbb P}\left(\psi_i=z\mid t_i\right)-\varphi_i(z)(t_i)\bigr|\\
&\leq M\sum_{z\in\prod_{j=1}^{i-1}\cD_j\times\mathcal{R}}\bigl|{\mathbb P}\left(\psi_i=z\mid t_i\right)-\varphi_i(z)(t_i)\bigr|
< M\delta,\nonumber
\end{align*}
where the second inequality holds by \eqref{equ:approx:1} and the assumption that $f$ is bounded by $M$.
\end{proof}
\newline\newline
For each $i\in\mathcal{N}$, let 
\begin{equation*}
\tau_i\equiv\tau_i(t_{i:n}) :\equiv\varphi_{i:n}\left(t_{i:n}\right)=(\varphi_i(t_i), \varphi_{i+1}(t_{i+1}),\dots,\varphi_n(t_n))\,.    
\end{equation*}
Intuitively, $\tau_i$ represents the approximated information of Player~$i$: 
Player~$i$ knows $\varphi_i$,
and since information is nested,
she also knows $\varphi_{i+1},\dots,\varphi_n$. 
The following result, which follows by the construction of $\left(\tau_i\right)_{i\in\mathcal{N}}$, 
details some properties of this approximated information structure.

\begin{lemma}
    $\text{ }$\begin{enumerate}
\item[\textbf{P1}.]  For each $i\in\mathcal{N}$,  $\tau_i$ has finite image.

\item[\textbf{P2}.]  For every $1\leq i\leq j\leq n$, $\tau_j$ is determined by $\tau_i$.

\item[\textbf{ P3}.]  For each $i\in\mathcal{N}$, $\tau_i(t_{i:n})$ is determined by $t_i$.

\item[\textbf{P4}.]  For each $i\in\mathcal{N}$, $\varphi_i(t_i)$ is determined by $\tau_i(t_{1:n})$, ${\mathbb P}$-a.s.
\end{enumerate}
\end{lemma}

\begin{proof}
\textbf{P1}, \textbf{P2}, and \textbf{P4} follow from the construction. 
We show that \textbf{P3} holds as well.
Fix then $i\in\mathcal{N}$, and 
recall that 
since the information is nested, $t_{i:n}$ is determined by $t_i$. Thus, $\tau_i(t_{1:n})$ is determined by $t_i$ and hence \textbf{P3} follows. 
\end{proof}

\begin{remark}\label{remark: tau_i(t_i)}
    \normalfont Due to \textbf{P3}, from now on, for each $i\in\mathcal{N}$, we shall write $\tau_i\equiv\tau_i(t_i)\equiv\tau_i(t_{i:n})$.
\end{remark}

\subsubsection{Proof: existence of Bayesian $\ep$-equilibrium under \textbf{A1}}\label{subsec: preliminary step}

We now define a Bayesian game $\widetilde \Gamma$,
which is similar to $\Gamma$, 
except that the information available to each player $i \in \cN$ is $\tau_i$ rather than $t_i$. 
Specifically, $\widetilde{\Gamma}$ is given by
\begin{itemize}
    \item A finite set of players $\widetilde{\mathcal{N}}\equiv\mathcal{N}=\{1,2,\ldots,n\}$.

    \item For every $i\in\mathcal{N}$, the set of types of Player $i$ is the range of $\tau_i=\varphi_{i:n}$, i.e., 
    $\cT_i\equiv \prod_{j=i}^n\mathcal{D}_j$ and hence $\widetilde {\mathcal{T}}\equiv\prod_{i\in\mathcal{N}}\widetilde{T}_i$.

    \item A common prior distribution $\widetilde{\mathbb P}$ on $\widetilde{\mathcal{T}}$,
    which is the push-forward probability measure induced by $\tau_{1:n}$ with respect to the probability measure ${\mathbb P}$:
    \[\widetilde {\mathbb P}(\widetilde t) \equiv {\mathbb P}(\tau_{1:n} = \widetilde t), \ \ \ \forall \widetilde t \in \widetilde\cT. \]
Denote by $\widetilde{\mathbb E}[\cdot]$ the expectation operator that corresponds to $\widetilde{\mathbb P}$.
    \item For each $i\in\mathcal{N}$, the set of actions available to Player $i$ is $\widetilde{\mathcal{A}}_i\equiv\mathcal{A}_i$,
    so that $\widetilde \cA = \cA$.

    \item 
For each $i \in \cN$, a measurable payoff function $\widetilde{R}_i: \widetilde{\mathcal{T}} \times \cA \to \bR$ given by
\begin{equation*}
    \widetilde{R}_i\left(\widetilde{t},a\right)\equiv \mathbb{E}\left[R_i(a)|\tau_{1:n}=\widetilde{t}\ \right],
\end{equation*}
for every $a\in{\mathcal{A}}$ and $\widetilde{t}\in\widetilde{\mathcal{T}}$ for which $\widetilde{\mathbb{P}}(\widetilde{\mathcal{T}}=\widetilde{t})>0$.
When $\widetilde{\mathbb{P}}(\widetilde{\mathcal{T}}=\widetilde{t})=0$,
the definition of $\widetilde R_i(\widetilde t,a)$ is irrelevant.
\color{black}
For each $i \in \cN$,
denote by $\widetilde{R}_i(a) : \mathcal{T} \to \bR$ the $a$-section of $\widetilde{R}_i$, for each $a \in \cA$;
and by $\widetilde{R}_i(\widetilde{t}) : \cA \to \bR$ the $\widetilde{t}$-section of $\widetilde{R}_i$, for each $\widetilde{t} \in \cT$.
We also set $\widetilde{R} \equiv (\widetilde{R}_i)_{i \in \cN}$.
\end{itemize}
The tower rule implies that for every $a\in\mathcal{A}$ and $i\in\mathcal{N}$,
\begin{equation*}
    \widetilde{\mathbb{E}}[\widetilde R_i(\widetilde t,a)]=\mathbb{E}[R_i(t,a)].
\end{equation*}
For each $i\in\mathcal{N},$ $\tau_{i:n}$ is determined by $\tau_i$, 
and hence for every $i\in\mathcal{N}$, $a\in\mathcal{A}$, and $\widetilde t = (\widetilde t_i)_{i \in I} 
\in\widetilde{\mathcal{T}}$ for which 
$\widetilde{\mathbb{P}}(\widetilde t)>0$,
the tower rule also implies that 
\begin{equation}\label{eq: approximated game conditional payoffs}
 \widetilde{\mathbb{E}}\left[\widetilde R_i(a)|\widetilde{t}_i\right]=\mathbb{E}\left[R_i(a)|\tau_i=\widetilde{t}_i\right].
\end{equation}
This means that the expected payoff of Player $i$ in $\widetilde{\Gamma}$ given her type 
$\widetilde{t}_i$ 
is the same as her expected payoff in $\Gamma$ given the event $\{\tau_i=\widetilde{t}\}$. 

A strategy of Player~$i$ in $\widetilde\Gamma$ is a function $\widetilde s_i : \widetilde T_i \to \Delta(\cA_i)$.
Such a function can be interpreted as a strategy in $\Gamma$
that is determined by $\tau_i$
(which, in turn, is determined by $t_i$).
Together with \eqref{eq: approximated game conditional payoffs},
this implies that 
a strategy profile
$\widetilde s=(\widetilde s_i)_{i\in\mathcal{N}}$ is a Bayesian $0$-equilibrium in $\widetilde{\Gamma}$ if and only if for every $i\in\mathcal{N}$
and every strategy $s_i \in \mathcal{S}_i$ determined by $\tau_i$, 
    \begin{equation}
    \label{equ:bayesian Gamma tilde:eq}
        U_i(\widetilde s)(t_i)=\mathbb{E}\left[R_i(\widetilde s)|\tau_i\right]\geq \mathbb{E}\left[R_i(s_i,\widetilde s_{-i})|\tau_i\right]=U_i(s_i,\widetilde s_{-i})(t_i), \ \ {\mathbb P}\text{-a.s.}
    \end{equation}

Since the sets of types in $\widetilde\Gamma$ are finite,
this game
\color{black}
admits a Bayesian 0-equilibrium.

The next lemma concerns the original game $\Gamma$,
and states that if each Player $j \in\mathcal{N} \setminus \{i\}$ adopts a strategy $s_j$ which is determined by $\tau_j$ (i.e., a strategy which is also feasible to her in $\widetilde{\Gamma}$),
then Player~$i$ has a $(\delta M|\cA|)$-best response which is determined by $\tau_i$ (i.e., a $\delta M|\cA|)$-best response in $\Gamma$ which is also feasible to her in $\widetilde{\Gamma}$). 
In view of \eqref{equ:bayesian Gamma tilde:eq}, 
this implies that every Bayesian $0$-equilibrium in $\widetilde{\Gamma}$ is a Bayesian $(\delta M|\cA|)$-equilibrium in $\Gamma$. 


\begin{lemma}
\label{lemma:br}
Suppose that \textbf{A1} holds.
Let $i \in \cN$ be a player, and let $s_{-i}\equiv\left(s_j\right)_{j\in\mathcal{N}_{-i}}\in\mathcal{S}_{-i}$ be a strategy profile such that $s_j$ is determined by $\tau_j$ for every $j\in\mathcal{N} \setminus \{i\}$. 
Then $\sup_{s_i\in\mathcal{S}_i} U_i(s_i,s_{-i} \mid t_i)$ is a random variable,
and there exists $s^*_{i}\in\mathcal{S}_i$ which is determined by $\tau_i$ such that
\begin{equation*}
U_i(s^*_i,s_{-i} \mid t_i)\geq \sup_{s_i\in\mathcal{S}_i} U_i(s_i,s_{-i} \mid t_i)-\delta M\left|\mathcal{A}\right|,  \ \ {\mathbb P}\text{-a.s.}
\end{equation*}
\end{lemma}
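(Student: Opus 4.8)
The plan is to reduce Player~$i$'s optimization over (behavior) strategies to a pointwise optimization over her finitely many pure actions, and then to show that the resulting action-value function agrees, up to an error of order $\delta$, with a $\cG_i$-measurable random variable. First I would fix $i$ and, for each pure action $a_i \in \cA_i$, introduce the conditional action-value
\[
 Q_i(a_i) \equiv E\Bigl[\,\textstyle\sum_{a_{-i}} p_{s_{-i}}(a_{-i})\,R_i(a_i,a_{-i}) \,\Bigm|\, \cF_i\Bigr].
\]
Since any $s_i\in\cS_i$ is $\cF_i$-measurable, one may pull $s_i(a_i)(\omega)$ out of the conditional expectation to get $U_i(s_i,s_{-i}\mid\cF_i)=\sum_{a_i} s_i(a_i)\,Q_i(a_i)$, a convex combination of the $Q_i(a_i)$. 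Hence, $P$-a.s.,
\[
 \sup_{s_i\in\cS_i} U_i(s_i,s_{-i}\mid\cF_i) = \max_{a_i\in\cA_i} Q_i(a_i),
\]
the right-hand side being a finite maximum of conditional expectations that are bounded by \textbf{A1$'$}; this already settles the claim that the supremum is a random variable. It then remains to produce a $\cG_i$-measurable $s_i^*$ whose conditional value lies within $\delta M|\cA|$ of this maximum.

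The heart of the argument is to approximate each $Q_i(a_i)$ by a $\cG_i$-measurable random variable. Write $Q_i(a_i)=E[h_{a_i}\mid\cF_i]$ with $h_{a_i}\equiv\sum_{a_{-i}} p_{s_{-i}}(a_{-i})R_i(a_i,a_{-i})$; note $|h_{a_i}|\le M$ since $\sum_{a_{-i}} p_{s_{-i}}(a_{-i})=1$. By hypothesis each $s_j$ $(j\neq i)$ is $\cG_j$-measurable, hence a function of $(\varphi_j,\ldots,\varphi_n)$; combined with the fact that $R$ is a coordinate of $\psi_i$, this exhibits $h_{a_i}$ as a Borel function of the pair $(\psi_i,\Phi_{\ge i})$, where $\Phi_{\ge i}\equiv(\varphi_i,\ldots,\varphi_n)$ generates $\cG_i$ and is finite-valued. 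The key point is that $\Phi_{\ge i}$ is $\cF_i$-measurable by \textbf{P3}, so it may be frozen: on each atom $G$ of the finite field $\cG_i$, on which $\Phi_{\ge i}\equiv\gamma_G$ is constant, one has $h_{a_i}\mathbf 1_G=f^{\gamma_G}_{a_i}(\psi_i)\mathbf 1_G$, where $f^{\gamma}_{a_i}$ is the bounded ($\le M$) Borel function on $\prod_{j=1}^{i-1}\cD_j\times\cR$ obtained by substituting $\Phi_{\ge i}=\gamma$ and $\psi_i=z$ into $h_{a_i}$. Since $G\in\cF_i$, this gives $Q_i(a_i)=E[f^{\gamma_G}_{a_i}(\psi_i)\mid\cF_i]$ on $G$, and applying Lemma~\ref{lemma: approximation} to each $f^{\gamma_G}_{a_i}$ and patching over the finitely many atoms yields the $\cG_i$-measurable random variable $\widehat Q_i(a_i)\equiv\sum_{G}\mathbf 1_G\,E_{\varphi_i}[f^{\gamma_G}_{a_i}]$, which satisfies $|Q_i(a_i)-\widehat Q_i(a_i)|<M\delta$, $P$-a.s.

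Finally, I would let $s_i^*$ be the pure strategy selecting at each $\omega$ some $a_i^*(\omega)\in\argmax_{a_i}\widehat Q_i(a_i)(\omega)$, with ties broken by a fixed measurable rule; since each $\widehat Q_i(a_i)$ is $\cG_i$-measurable and $\cA_i$ is finite, $s_i^*$ is $\cG_i$-measurable. With $\bar a_i(\omega)\in\argmax_{a_i}Q_i(a_i)(\omega)$, the two-sided comparison
\[
 Q_i(\bar a_i)\le\widehat Q_i(\bar a_i)+M\delta\le\widehat Q_i(a_i^*)+M\delta\le Q_i(a_i^*)+2M\delta
\]
yields $U_i(s_i^*,s_{-i}\mid\cF_i)=Q_i(a_i^*)\ge\max_{a_i}Q_i(a_i)-2M\delta$, and $2M\delta\le\delta M|\cA|$ (the case $|\cA|=1$ being trivial). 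Alternatively, approximating the $|\cA|$ summands of $Q_i(a_i)$ one at a time produces the bound $\delta M|\cA|$ directly.

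The main obstacle is the middle step, namely the reduction of $h_{a_i}$ to a function of $\psi_i$. The delicate feature is that for $j<i$ the strategy $s_j$ genuinely depends on $\varphi_i,\ldots,\varphi_n$, which are \emph{not} coordinates of $\psi_i$, so $h_{a_i}$ is not a function of $\psi_i$ outright; only after conditioning on the $\cF_i$-measurable, finite-valued vector $\Phi_{\ge i}$ (equivalently, restricting to an atom of $\cG_i$) does the dependence collapse onto $\psi_i$, making Lemma~\ref{lemma: approximation} applicable. Carrying out this freezing argument correctly — in particular verifying that the patched $\widehat Q_i(a_i)$ is genuinely $\cG_i$-measurable — is where the care lies.
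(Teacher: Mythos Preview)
Your proposal is correct and follows essentially the same route as the paper: reduce to pure actions, write the action-value integrand as a function of $(\psi_i,\Phi_{\ge i})$, freeze the $\cF_i$-measurable vector $\Phi_{\ge i}$ (the paper writes this with a Dirac measure at $\phi_{i+}(\omega)$ rather than a sum over atoms of $\cG_i$, but it is the same manoeuvre), apply Lemma~\ref{lemma: approximation}, and select a $\cG_i$-measurable argmax. The only cosmetic difference is that the paper applies Lemma~\ref{lemma: approximation} term-by-term over $a_{-i}$, picking up a factor $|\cA_{-i}|$, whereas you apply it once to the single function $h_{a_i}$ (bounded by $M$) and then do the explicit two-sided comparison; your version in fact yields the slightly sharper constant $2M\delta$.
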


\begin{proof}
Player~$i$'s (random) expected payoff given her information, 
when she selects action $a_i\in\mathcal{A}_i$ and the other players follow the strategy profile $s_{-i}$, 
is the random variable $m_i(a_i)$ determined by $t_i$ and defined as 
\begin{equation}
m_{i}(a_i)(t_i)\equiv\sum_{a_{-i}\in\mathcal{A}_{-i}}{\mathbb E}\left[R_i(a_i,a_{-i})p_{s_{-i}}(a_{-i})\mid t_i\right], \ \ {\mathbb P}\text{-a.s.}
\label{equ:8.1}
\end{equation}  
By \textbf{A1}, $\mathcal{A}_i$ is a finite set, 
hence the best payoff that Player~$i$ can achieve is the random variable $\widetilde m_i$ determined by $t_i$ and defined by
\begin{equation*}
\widetilde m_i(t_i)\equiv\max\left\{m_i(a_i)(t_i)\ \colon \ a_i\in\mathcal{A}_i\right\}, \ \ \forall t_i \in \cT_i.    
\end{equation*}
In particular, $\sup_{s_i\in\mathcal{S}_i} U_i(s_i,s_{-i} \mid t_i) = \widetilde m_i$ is a random variable.

Let us show that for each $a_i\in\mathcal{A}_i$, there exists a random variable $\widehat{m}_i(a_i)$ which is determined by $\tau_i$ and such that
  \begin{equation}\label{eq:approximation eta}
     \left|\widehat{m}_i(a_i)-m_i(a_i)\right|< \delta M\left|\mathcal{A}\right|, \ \ {\mathbb P}\text{-a.s.}
  \end{equation}
Before proving the existence of such random variables $(\widehat m_i(a_i))_{a_i\in \cA_i}$,
we will show how the lemma follows from their existence.
First, denote 
\begin{equation}
\widehat{m}_i\equiv\max\left\{\widehat{m}_i(a_i)\ \colon \ a_i\in\mathcal{A}_i\right\},
\end{equation}
which is a random variable determined by $\tau_i$,
and notice that by \eqref{eq:approximation eta}, $\left|m_i-\widehat{m}_i\right|<\delta M\left|\mathcal{A}\right|$, ${\mathbb P}$-a.s. 
Each of the random variables $(\widehat m_i(a_i))_{a_i\in \cA_i}$ is determined by $\tau_i$, and hence the set of maximizers $\arg\max\left\{\widehat{m}_i(a_i);a_i\in\mathcal{A}_i\right\}$ is finite and also determined by $\tau_i$. 
Therefore, 
there is a Borel
selector\footnote{ Let $X,Y$ be topological spaces
and $\Phi:X \Rightarrow Y$ a correspondence (a set-valued mapping). 
A \emph{Borel selector} of $\Phi$ is a Borel mapping  $f:X \rightarrow Y$ such that $f(x) \in \Phi(x)$ for all $x \in X$.
} $A_i$ of $\arg\max\left\{\widehat{m}_i(a_i);a_i\in\mathcal{A}_i\right\}$, 
that is, 
\[ A_i\in \arg\max\left\{\widehat{m}_i(a_i)\ \colon \ a_i\in\mathcal{A}_i\right\}, \ \ {\mathbb P}\text{-a.s.},\]
which is determined by $\tau_i$; 
hence $A_i$ satisfies the requirements of the lemma.  



We turn to prove the existence of $\widehat m_i(a_i)$. Recall that 
$\psi_i = (\varphi_1,\varphi_2,\ldots,\varphi_{i-1},R)$.
The strategy $s_j$ is determined by $\tau_j$ for every $j \in \cN \setminus \{i\}$,
and by \textbf{P2}  it is also determined by $\tau_1$.
Therefore, 
$p_{s_{-i}}\left(a_{-i}\right)$ is also determined by $\tau_1=\varphi_{1:n}$,
for every $a_{-i}\in\mathcal{A}_{-i}$. 
By~\eqref{equ:8.1}, there exists a measurable function $f_{a_{-i}} : \text{Supp}\left(\psi_i,\varphi_{i:n}\right) \to [-M,M]$ such that
\begin{equation}
\label{equ:eta}
    m_i(a_i)(t_i)= \sum_{a_{-i}\in\mathcal{A}_{-i}}{\mathbb E}\left[f_{a_{-i}}\left(\psi_i,\varphi_{i:n}\color{black}\right)\mid t_i\right], \ \ {\mathbb P}\text{-a.s.}
\end{equation}
By definition, $\tau_i=\varphi_{i:n}$. Hence, by \textbf{P3}, 
 $\varphi_{i:n}$
\color{black}
is also determined by $t_i$. 
Therefore,
\begin{equation}\label{eq: conditional}
     m_i(a_i)(t_i)=\sum_{a_{-i}\in\mathcal{A}_{-i}}\int {\mathbb E}\left[f_{a_{-i}}\left(\psi_i,x\right)\mid t_i\right]\chi_{\varphi_{i:n}(t_i)}\,({\rm d}x), \ \ {\mathbb P}\text{-a.s.},
\end{equation}
where $\chi_{\varphi_{i:n}(t_i)}$ is the Dirac measure concentrated at $\left\{\varphi_{i:n}(t_i)\color{black}\right\}$. 
Define

\begin{align}\label{eq: eta hat}
\widehat{m}_i(a_i)(t_i)&\equiv\sum_{a_{-i}\in\mathcal{A}_{-i}}\int {\mathbb E}_{\varphi_i}\left[f_{a_{-i}}\left(\,\cdot\,,x\right)\right](t_i)\,\chi_{\varphi_{i:n}(t_i)\color{black}}\, {\rm d}x\\
&=
\sum_{a_{-i}\in\mathcal{A}_{-i}} {\mathbb E}_{\varphi_i}\left\{f_{a_{-i}}\left[\,\cdot\,,\varphi_{i:n}(t_i)\color{black}_{i:n}(t_i)\right]\right\}(t_i), \ \ {\mathbb P}\text{-a.s.},\nonumber
\end{align}
and notice that $\widehat{m}_i(a_i)(t_i)$ is determined by $\varphi_i(t_i)$ and $\varphi_{i:n}(t_i)\color{black}$, for every realization of types profile $t \in \cT$.
Since both $\varphi_i$ and $\varphi_{i:n}(t_i)\color{black}$
are determined by $\tau_i$, 
$\widehat{m}_i(a_i)$ is also determined by $\tau_i$. 
Finally, 
by \eqref{eq: conditional}, \eqref{eq: eta hat}, and Lemma \ref{lemma: approximation},
\begin{align*}
&\left|\widehat{m}_i(a_i)(t_i)-m_i(a_i)(t_i)\right|
\\&< 
\sum_{a_{-i}\in\mathcal{A}_{-i}}
\int 
\Bigl|\color{black}{\mathbb E}\left[f_{a_{-i}}\left(\psi_i,x\right)\mid t_i\right]
- {\mathbb E}_{\varphi_i}\left[f_{a_{-i}}\left(\,\cdot\,,x\right)\right](t_i)\Bigr|\color{black}\,\chi_{\varphi_{i:n}(t_i)\color{black}}({\rm d}x)
\\
&<\delta M\left|\mathcal{A}_{-i}\right|, \ \ {\mathbb P}\text{-a.s.},
\end{align*}
 and~\eqref{eq:approximation eta} follows. 
\end{proof}

\bigskip

\subsection{Existence of a Bayesian $0$-equilibrium under \textbf{A1} and \textbf{A2}}\label{subsec: proof 1} 

In this section we derive Theorem~\ref{thm:0} from the existence of Bayesian $\ep$-equilibria for $\ep > 0$,
which exist by the results in Section~\ref{subsec: preliminary benchmarks}.
We will fix a sequence $(s^k)_{k \in \bN}$ of Bayesian $\frac{1}{k}$-equilibria, and show that this sequence has a measurable accumulation point which is a Bayesian $0$-equilibrium. 
In Section \ref{subsec: selectors} we present tools related to the existence of measurable selections and their integration. 
In Section \ref{subsec:expected_conditional_payoffs}, we represent the conditional expected payoff in a useful way. 
In Section \ref{subsec:proof_Psi_def}, 
we define the  correspondences $(\Psi_i)_{i \in \cN}$ of the accumulation points of $(s^k)_{k \in \bN}$ and study some of their properties, 
In Section \ref{subsec:proof_Psi_eq}, we characterize Bayesian $0$-equilibria in terms of $(\Psi_i)_{i \in \cN}$. 
In Sections \ref{subsec:proof_Psi_selections} and \ref{subsec:proof_build_eq} we show that there exist measurable selections of $(\Psi_i)_{i \in \cN}$ satisfying the characterization of the $0$-equilibrium presented in Section \ref{subsec:proof_Psi_eq}.  
\color{black}

\subsubsection{Selectors and Integration}\label{subsec: selectors}

Let $X,Y$ be standard Borel spaces,%
\footnote{A \emph{standard Borel space} is a topological space homeomorphic to a Borel subset of a Polish space.} 
and $\Phi:X \Rightarrow Y$ a correspondence (a set-valued mapping). 
We say that $\Phi$ has nonempty compact values if $\Phi(x)$ is nonempty and compact for every $x \in X$.
The following classical result provides topological conditions that guarantee the existence of a Borel selector.%
\footnote{\cite{kuratowski1965general} states the measurability assumption on $\Phi$ 
in a way that, for nonempty compact valued correspondences, is equivalent to the one we provided here; see, e.g., \cite{Himmelberg1975}.}

\begin{theorem}[Kuratowski and Ryll-Nardzewski \color{black} \cite{kuratowski1965general}]\label{thm:selector}
	Suppose the correspondence $\Phi:X \Rightarrow Y$ has a Borel graph and nonempty compact values. 
	Then, $\Phi$ has a Borel selector.
\end{theorem}

Let $\textbf{S}_\Phi$ denote the collection of all Borel selectors of $\Phi$. 
Suppose $Y$ is a subset of a Euclidean space, and let ${\mathbb P}$ be a finite Borel measure on $X$.  The \emph{Aumann integral} of $\Phi$ (with respect to ${\mathbb P}$) is 
\[
\int_X \Phi(x) {\mathbb P}(dx) \equiv \left\{ \int_X f(x) {\mathbb P}(dx) \mid f \in \textbf{S}_\Phi \right\}.
\]

The following result appears in \cite{Aumann1965Integrals} and the references therein when $X=[0,1]$ and ${\mathbb P}$ is the Lebesgue measure; 
the general case follows by minor modifications, or from more general results, like Theorem \ref{thm:mertens} below. 

\begin{theorem}\label{thm:aumann_integral_closure}
Suppose the correspondence $\Phi:X \Rightarrow \bR^n$ is bounded,\footnote{That is, there is a bounded $W \subseteq \bR^n$ such that $\Phi(x) \subseteq W$ for every $x \in X$.} and has a Borel graph and nonempty compact values. Then $\int_X \Phi(x) {\mathbb P}(dx)$ is nonempty and compact.
\end{theorem}

We need the following slight generalization of Theorem \ref{thm:aumann_integral_closure}.

\begin{proposition}\label{prop:aumann_integral_closure}
Let $X$ be a standard 
\color{black} Borel space, and let $Y$ be a compact metric space.
Suppose the correspondence $\Phi \colon X \Rightarrow Y$ has a Borel graph and nonempty compact values. 
Let $\zeta \colon Y \rightarrow \bR^n$ be continuous. 
Then the set
	\begin{equation}\label{eq: correspondence}
 \left\{ \int_X \zeta \circ f(x) {\mathbb P}(dx) \colon f \in \textbf{S}_\Phi  \right\} \subset \bR^n
\end{equation}
is nonempty and compact.
\end{proposition}



\begin{proof}
	Define $\Psi:X \rightarrow \bR^n$ by $\Psi = \zeta \circ \Phi$, i.e., $\Psi(\cdot) = \zeta(\Phi(\cdot))$. 
Since $Y$ is  compact, $\zeta$ is continuous, and $\Phi$ has nonempty compact values, 
it follows that $\Psi$ is bounded and has nonempty compact values. 
We contend that 
	\[
	\left\{ \zeta \circ f \colon f \in \textbf{S}_\Phi  \right\} = 	 \textbf{S}_\Psi,
	\]
	from which the proposition will follow due to Theorem \ref{thm:aumann_integral_closure}. 
	Clearly, $\left\{ \zeta \circ f \colon f \in \textbf{S}_\Phi  \right\} \subseteq 	 \textbf{S}_\Psi$, as $\zeta \circ f \in  \textbf{S}_\Psi$ for each $f \in \textbf{S}_\Phi$. 
Conversely, suppose $g \in  \textbf{S}_\Psi$. 
Since $Y$, the domain of $\zeta$, is a nonempty compact set and $\zeta$ is continuous, 
the correspondence $\zeta^{-1}(\cdot) : \Psi(X) \to Y$ has a Borel graph and nonempty compact values. Thus,  Theorem \ref{thm:selector}, applied to the correspondence $\zeta^{-1}(\cdot)$, 
yields a Borel mapping $\zeta'\colon\mathrm{Image}(\zeta) \rightarrow Y$ such that $\zeta \circ \zeta' = \mathrm{id}$. 
	Hence, $f:= \zeta' \circ g$ satisfies $g = \zeta \circ f$ and $f \in \textbf{S}_\Phi$.
\end{proof}

\bigskip

When $(f_n)_{n=1}^\infty$ is a sequence of mappings
between two topological spaces $X$ and $Y$,
we denote 
\color{black}
by $\overline{\text{Lim}}((f_n)_n) \colon X \Rightarrow Y$  
the correspondence such that $\overline{\text{Lim}}((f_n)_n)(x)$ is the set of all accumulation points of $(f_n(x))_{n=1}^\infty$,
for every $x \in X$. 
When $X$ and $Y$ are standard Borel spaces with $Y$ compact, this correspondence has a Borel graph with nonempty compact values, see \cite[Prop. 10.1]{Me87}.

\begin{lemma}\label{lemma:aumann_integral_closure}
Let $(X, P)$ be a standard Borel measure space, 
let $Y$ be a compact metrizable space, and for each $n \in \bN$, let $f_n \colon X \rightarrow Y$ be measurable. 
Let $\zeta \colon Y \rightarrow \bR^n$ be continuous, and suppose
	\[
	\int_X \zeta(f_n(x)) {\mathbb P}(dx) \xrightarrow[n\to\infty]{} z^*.
	\]
	Then, there is a Borel selector $f^*\colon X \rightarrow Y$ of the correspondence $\overline{\mathrm{Lim}}((f_n)_n)$ such that
	\[
	z^* = \int_X \zeta(f^*(x)) {\mathbb P}(dx).
	\]
\end{lemma}

When $Y \subseteq \bR^n$ and $\zeta = \mathrm{id}$, 
Lemma \ref{lemma:aumann_integral_closure} 
was proven in, e.g., 
\cite[p. 69]{Himmelberg1975}, or \cite{Aumann1976}.%
\footnote{\cite{Aumann1976} addresses the case where ${\mathbb P}$ is non-atomic; the case where ${\mathbb P}$ may have atoms follows by passing to a sequence $(f_n)_{n=1}^\infty$ which converges on atoms, by a diagonalization construction.} 

\bigskip

\begin{proof}
Denote for simplicity 
$\mathcal{L} = \overline{\text{Lim}}((f_n)_n)$ 
and
$\widehat{\mathcal{L}}=\overline{\text{Lim}}((\zeta \circ f_n)_n)$. Applying the result in the restricted case $Y \subseteq \bR^n$ and $\zeta = \mathrm{id}$ to the series $(\zeta \circ f_n)_n$ shows that there is Borel selector $g^*$ of $\widehat{\mathcal{L}}$ such that
		\[
	z^* = \int_X g^*(x) {\mathbb P}(dx).
	\]
	We claim that%
\footnote{ In fact, there is equality in~\eqref{eq: L}. However, we only need this inclusion.} 
 \begin{equation}\label{eq: L}
    \widehat{\mathcal{L}}(x) \subseteq \zeta(\mathcal{L}(x)), \ \ \forall x\in X.
    \end{equation} 
Indeed, if $y \in \widehat{\mathcal{L}}(x)$, 
	then there are indices $(n_k)_{k \in \bN}$ such that $\lim_{k \to \infty} \zeta(f_{n_k}(x)) = y$. 
	Letting $z$ be an accumulation point of $(f_{n_k}(x))_{k \in \bN}$, which exists by compactness and metrizability of $Y$, 
 we deduce by the continuity of $\zeta$ that $z \in \mathcal{L}(x)$ and $y = \zeta(z)$. 
    
As in the proof of Proposition~\ref{prop:aumann_integral_closure},    
the correspondence $\zeta^{-1}$ has a Borel graph and nonempty compact values, 
and hence so does $\zeta^{-1}(g^*(\cdot))$. 
Thus, Eq.~\eqref{eq: L} implies that for every $x\in X$, $\zeta^{-1}(g^*(x))$ is a nonempty compact subset of $\mathcal{L}(x)$. 
It remains to apply Theorem~\ref{thm:selector} to the correspondence $\zeta^{-1}(g^*(\cdot))$.  
\end{proof}

\bigskip
	
The following result is a slight generalization of 
the Measurable ``Measurable Choice'' Theorem 
from \cite{Me87}, adapted to a bounded Borel setting;%
	\footnote{A similar but weaker result is proven in \cite{Artstein1989},
which only gives an ``almost everywhere" type of selection.} 
	\cite{Me87} deals with the case 
$W \subseteq \bR^n$,
	$\zeta = \mathrm{id}$, and $B = Y \times Z$.
	
\begin{theorem}\label{thm:mertens}
Let $Y$ and $Z$ be Borel spaces, and let $F$ be a  correspondence from a Borel set $B \subseteq Y \times Z$ to a compact metric space $W$, with nonempty compact values and a Borel graph. 
Let $\zeta\colon W \rightarrow \bR^n$ be continuous, and let $q$ be a Borel transition kernel from $Y$ to $Z$,%
\footnote{A \emph{transition kernel} is a map from $Y$ to $\Delta(Z)$, the space of Borel probability distributions on $Z$, such that for each Borel $B \subseteq Y$, the mapping $z \rightarrow q(B \mid z)$ is Borel.} 
such that $q(B_y \mid y) = 1$  for each $y \in Y$, 
where $B_y \equiv \{z \in Z \colon (y,z) \in B \}$ is the $y$-section of $B$. 
Define a correspondence $F^\diamond \colon Y \Rightarrow\bR^n$ by
\begin{equation}\label{eq:def_F_diamond}
F^\diamond(y) \equiv \left\{ \int_{B_y} \zeta \circ f(y,z) q(dz \mid y) \colon f \in \textbf{S}_{F(y,\cdot)} \right\}.
\end{equation}
Then:
\begin{itemize}
\item $F^\diamond$ is bounded, and has nonempty compact values and a Borel graph $\mathrm{Gr}(F^\diamond)$.
\item There is a Borel mapping $g \colon \mathrm{Gr}(F^\diamond) \times Z \rightarrow \bR^n$ such that for each $(y,u) \in \mathrm{Gr}(F^\diamond)$ and each $z \in B_y$, we have $g(y,u,z) \in \zeta\circ F(y,z)$ and
			\[
			u = \int_Z \zeta \circ g(y,u,s) q(ds \mid y).
			\]
		\end{itemize}
	\end{theorem}

\begin{proof}
As mentioned, \cite{Me87} proved the case $B = Y \times Z$ (in which case $B_y=Z$ for each $y \in Y$), $W \subseteq \bR^n$, and $\zeta = \mathrm{id}$. 
To prove the general case, 
fix $x_0 \in \bR^n$, and define $\widetilde{F}:Y \times Z \Rightarrow \bR^n$ by $\widetilde{F} = \zeta \circ F$ on $B$, and $\widetilde F \equiv \{ x_0 \}$ outside of $B$.
The argument from the proof of Proposition~\ref{prop:aumann_integral_closure}, together with the fact that $q(B_y \mid y) = 1$ for each $y \in Y$, shows that 
\[
F^\diamond(y) = \left\{ \int_Z \widetilde{f}(y,z) q(dz \mid y) \colon \widetilde{f} \in \textbf{S}_{\widetilde{F}(y,\cdot)
} \right\}.
\]
Since $W$ is compact, 
$F^\diamond$ is bounded,
and
by Proposition \ref{prop:aumann_integral_closure}, 
it has nonempty compact values.
By \cite{Me87}, $F^\diamond$ is measurable,
and hence the first bullet holds.

By \cite{Me87} once again,
$F^\diamond$ has a Borel selector, i.e., 
a Borel mapping $\widetilde{g} \colon \mathrm{Gr}(F^\diamond) \times Z \rightarrow \bR^n$ such that $g(y,u,z) \in \widetilde{F}(y,z)$
for each $(y,u) \in \mathrm{Gr}(F^\diamond)$ and $z \in Z$ (and hence $g(y,u,z) \in \zeta \circ F(y,z)$ for $z \in B_y$), and
	\[
	u = \int_Z \widetilde{g}(y,u,s) q(ds \mid y).
	\]
By applying the same arguments as in  the proof of Lemma~\ref{lemma:aumann_integral_closure}, 
there is Borel mapping $\zeta'\colon\mathrm{Image}(\zeta) \rightarrow W$ such that $\zeta \circ \zeta' = \mathrm{id}$. 
Setting $g = \zeta' \circ \widetilde{g}$ yields the mapping indicated in the second bullet.
\end{proof}

\subsubsection{Conditional Expected Payoffs}
\label{subsec:expected_conditional_payoffs}
 
Recall that $M$ is a bound on the payoffs in the game. 
For each $i \in \cN$, set $\mathcal{P}_{i:n}\equiv[-M,M]^{(n-i+1) \times |\cA_{i:n}|}$,
so that $R(t) \in \cP_{1:n}$ for every $t \in \cT$.
\color{black}
Each vector $\rho_{i:n} \in \mathcal{P}_{i:n}$ corresponds to a vector of payoff functions
for the set of players $[i\!:\!n]$,
where the players' actions are $(\cA_j)_{j=i}^n$.
For such a vector, we denote by $\rho_j(a_{i:n})$
the coordinate that corresponds to Player~$j$
and to the action profile $a_{i:n}$.
The multilinear extension of $\rho_j$ is still denoted by $\rho_j$,
so that
\[ \rho_j(x_{i:n}) \equiv \sum_{a_{i:n} \in \cA_{i:n}}\rho_j(a_{i:n}) \prod_{k=i}^n x_k(a_k), \ \ \ 
\forall x_{i:n} \in \cX_{i:n}. \]

Denote by $ U_j(s_{1:i}, a_{i+1:n} | t_k)$ the expected payoff of Player~$j$ when players $[1\!:\!i]$ follow the strategies $s_{1:i}$ and players $[i+1\!:\!n]$ select the actions $a_{i+1:n}$, 
given that Player~$k$'s type is $t_k$.
It will be convenient to denote by 
\[ U_{i+1:n}(s_{1:i},\cdot | t_{i+1}) \equiv 
\bigl( U_{i+1:n}(s_{1:i}, a_{i+1:n} | t_{i+1})\bigr)_{a_{i+1:n} \in \cA_{i+1:n}} \in \cP_{i+1:n}\]
the payoff function of players $[i+1\!:\!n]$ that is induced by the strategy profile $s_{1:i}$ and the type $t_{i+1}$.
The multilinear extension of $U_{i+1:n}(s_{1:i},\cdot | t_{i+1})$ is still denoted by $U_{i+1:n}(s_{1:i},\cdot | t_{i+1})$,
and it is a mapping
from $\cX_{i+1:n}$ to $\bR^{n-i}$.


\color{black}

\subsubsection{The Limit of $\frac{1}{k}$-Equilibria: The Operator $\Psi_i$}\label{subsec:proof_Psi_def}

Since information is nested, $t_i$ determines $t_{i+1}, t_{i+2}, \ldots,t_n$.
For convenience, when $j > i$ and $s_j$ is a strategy of Player~$j$, we will sometimes write
$s_j(t_i)$ instead of $s_j(\kappa_{j-1} \circ \kappa_{j-2} \circ\dots\circ\kappa_{i}(t_i))$. 

Fix 
a sequence $(s^k)_{k=1}^\infty$ of strategy profiles such that  $s^k\equiv (s_1^k,\ldots,s_n^k)$ is a $\frac{1}{k}$-Bayesian equilibrium, for each $k \in \bN$,
which in turn is guaranteed by the result obtained in Section \ref{subsec: preliminary benchmarks}.
%
We would like to prove that an appropriate limit of $(s^k)_{k=1}^\infty$
is a Bayesian 0-equilibrium.
To this end, 
we will consider for each $i \in \cN$ the accumulation points of the sequence 
$\Big( s^k_{i:n}(t_i)), 
U_{i:n}(s^k,\cdot| t_i), U_{i+1:n}(s^k,\cdot| \kappa_i(t_{i}))
\Big)_{k=1}^\infty$,
and show that proper Borel selectors of these correspondences (indexed by $i$)
induce a Bayesian 0-equilibrium.

Define a correspondence $\Psi_n:\cT_n \Rightarrow \cX_{n} \times \cP_n$ by
\begin{align}\label{eq:Psi_def_n}
\Psi_n(t_n) \equiv 
\left\{ (x_n,\rho_n) \in \cX_n \times \cP_n
\mid  (x_n,  \rho_n) \in 
\overline{\rm Lim}\Big(\Big( s^k_{n}(t_n), 
U_n(s^k| t_n) \Big)_k\Big)   \right\},
\end{align}
and, for every $i < n$,
\color{black}
define the correspondence $\Psi_i:\cT_i \times \cX_{i+1:n} \times \cP_{i+1:n} \Rightarrow \cX_{i:n} \times \cP_{i:n}$ by
\begin{align}\label{eq:Psi_def}
&\Psi_i(t_i, x_{i+1:n}, \rho_{i+1:n} ) \equiv\\ 
&\Big\{ ((x_i,x_{i+1:n}),\widehat\rho_{i:n}) \in \cX_{i:n} \times \cP_{i:n}
\nonumber\\
&\ \ \ \ \ \ \ 
\mid  ((x_i,x_{i+1:n}),  \widehat\rho_{i:n},\rho_{i+1:n}) \in 
\overline{\rm Lim}\Big(\Big( s^k_{i:n}(t_i), 
U_{i:n}(s^k_{1:i-1},\cdot| t_i), U_{i+1:n}(s^k_{1:i},\cdot| \kappa_i(t_{i}))
\Big)_k\Big)   \Big\}.
 \nonumber
\end{align}
Note that $\Psi_i$ may have empty values.
This happens, for example, when $x_{i+1:n}$ is not an accumulation point of $(s^k_{i+1:n}(t_i))_{k=1}^\infty$.
The definition implies that if $(\widehat x_{i:n},\widehat\rho_{i:n}) \in \Psi_i(t_i,x_{i+1:n},\rho_{i+1:n})$,
then $\widehat x_{i+1:n} = x_{i+1:n}$.
The relation between $\widehat\rho_{i:n}$ and $\rho_{i+1:n}$
is more complex and we will not need it.
\color{black}


The following lemma holds from the definitions and the fact that each $\cP_i$ is compact.

\begin{lemma}
\label{lemma:6}
For $i\in \cN$, $\Psi_i$ has a Borel graph and  nonempty compact values.
\end{lemma}
If $(x_{1:n},\widehat\rho_{1:n}) \in \Psi_1(t_1,x_{2:n},\rho_{2:n})$,
then $\widehat\rho_{1:n}$ is a vector of payoff functions for all players.
In particular, the $j$'s coordinate $\widehat\rho_j$ of $\widehat\rho_{1:n}$ satisfies%
\footnote{Below, $U_j(a|t_1) = U_j(s|t_1)$,
where $s = (s_i)_{i \in \cN}$, is the strategy profile where each player selects the action $a_i$ at all types.}
\begin{equation}
\label{equ:rho1}
\widehat\rho_j(a) = U_j(a|t_1) = R_j(t,a), 
\end{equation}
for all $a \in \cA_{1:n}$,
where $t = (t_1, \kappa_1(t_1), \dots, \kappa_{n-1} \circ \dots \circ\kappa_2\circ\kappa_1(t_1))$.

\subsubsection{Equilibrium Characterization via $\Psi_1,\Psi_2,\ldots,\Psi_n$}\label{subsec:proof_Psi_eq}

In this section we will provide a characterization of Bayesian 0-equilibria in terms of the mappings 
$\Psi_1,\Psi_2,\ldots,\Psi_n$.
\color{black}

\begin{lemma}\label{prop:rho_argmax}
For each $i \in \cN$, ${\mathbb P}$-a.e.~$t_i \in \cT_i$, $x_{i+1:n} \in \cX_{i+1:n}$, and $\rho_{i+1:n} \in \cP_{i+1:n}$, 
if $(x_{i:n}, \widehat\rho_{i:n}) \in \Psi_i(t_i, x_{i+1:n}, \rho_{i+1:n})$, then
	\begin{equation}
 \label{equ:rho_argmax}
	x_i \in \argmax_{y_i \in \cX_i} \widehat\rho_i(y_i,x_{i+1:n}).
	\end{equation}
\end{lemma}
The expression under the argmax 
on the right-hand side of~\eqref{equ:rho_argmax} is the expected payoff of Player $i$  under the payoff function 
$\widehat\rho_{i}$ when Players $[i\!:\!n]$ use mixed actions $y_i,x_{i+1},\ldots,x_n$, respectively. 
Lemma~\ref{prop:rho_argmax},
which follows by continuity arguments,
is the only place in the proof where the fact that $s^k$ is a $\frac{1}{k}$-equilibrium, for each $k \in \bN$ is directly used.

\bigskip

\begin{proof}
Fix a player $i<n$, $t_i \in \cT_i$, $x_{i+1:n} \in \cX_{i+1:n}$, $\rho_{i+1:n} \in \cP_{i+1:n}$, and $\widehat\rho_{i:n} \in \cP_{i:n}$ such that $(x_i, \widehat\rho_{i:n}) \in \Psi_i(t_i, x_{i+1:n}, \rho_{i+1:n})$.
By assumption, there is a sequence of indices $(k_l)$ such that 
\[
\lim_{l \to \infty} \Big(s^{k_l}_{i:n}(t_i),   U_{i:n}(s^{k_{l}}_{1:i-1},\cdot| t_i) \Big) = ((x_i, x_{i+1:n}),  \widehat\rho_{i:n} ).
\]
For each $l=1,2,\ldots$, the expected payoff of Player $i$ with type $t_i$, playing mixed action $y_i \in \cX_i$ while the others follow strategy profile $s^{k_l}$ is, 
\begin{align}
U_i(s^{k_l}_{-i},y_i|t_i) = \sum_{a_i \in \cA_i} \sum_{a_{i+1:n} \in \cA_{i+1:n}}  y_i(a_i) \cdot \Big(\prod_{j=i+1}^n s^{k_l}_j(t_i)(a_j) \Big) \cdot U_i(s^{k_l}_{1:i-1},a_{i:n}|t_i). \nonumber
\end{align}
Since $s^{k_l}$ is a $\frac{1}{k_l}$-equilibrium, 
there is a set $\Xi_l \subseteq \mathcal{T}_i$ with ${\mathbb P}(\Xi_l)=0$ such that for every $t_i \notin \Xi_l$,
\[
U_i(s^{k_l}_{-i},y_i|t_i) \leq U_i(s^{k_l}|t_i) + \frac{1}{k_l},
\]
that is,
\begin{align}
\sum_{a_i \in \cA_i}& \sum_{a_{i+1:n} \in \cA_{i+1:n}}  y_i(a_i) \cdot \Big(\prod_{j=i+1}^n s^{k_l}_j(t_i)(a_j) \Big) \cdot U_i(s^{k_l}_{1:i-1},a_{i:n}|t_i) \label{equ:prop2}\\
& \leq \frac{1}{k_l} + \sum_{a_i \in \cA_i} \sum_{a_{i+1:n} \in \cA_{i+1:n}}  s_i^{k_l}(a_i) \cdot \Big(\prod_{j=i}^n s^{k_l}_j(t_i)(a_j) \Big) \cdot U_i(s^{k_l}_{1:i-1},a_{i:n}|t_i). \nonumber
\end{align}
Taking $l \rightarrow \infty$ gives that for every $t_i \in \mathcal{T}_i \backslash \bigcup_{l \in \mathbb{N}} \Xi_l$, 
\begin{align}
\sum_{a_i \in \cA_i}& \sum_{a_{i+1:n} \in \cA_{i+1:n}}  y_i(a_i)\cdot \Big(\prod_{j=i}^n x_j(a_j) \Big) \cdot \widehat\rho_i(a_{i:n}) \nonumber\\
& \leq \sum_{a_i \in \cA_i} \sum_{a_{i+1:n} \in \cA_{i+1:n}}  x_i(a_i) \cdot \Big(\prod_{j=i}^n x_j(a_j) \Big) \cdot \widehat\rho_i(a_{i:n}), \nonumber
\end{align}
and the claim follows.
The proof for $i=n$ is similar yet simpler,
since in~\eqref{equ:prop2} the inner summations on the two sides are vacuous.
\end{proof}

\begin{corollary}\label{cor:equilibrium_psi}
Let $s\in\mathcal{S}$ be a strategy profile.
Suppose that for ${\mathbb P}$-almost every $t_n \in \cT_n$
\[ (s_n(t_n), U_n(s_n|t_n)) \in \Psi_n(t_n), \]
and for each $i<n$ and ${\mathbb P}$-almost every $t_i \in \cT_i$ \color{black},
\begin{equation}\label{eq:equilibrium_condition_Psi}
\bigl(s_{i:n}(t_i), U_{i:n}(s|t_i)\bigr) \in 
\Psi_i\bigl( t_i, s_{i+1:n}(t_i), U_{i+1:n}(s|t_{i+1}) \bigr).
\end{equation}
Then, $s$ is a Bayesian $0$-equilibrium.
\end{corollary}


\subsubsection{Selections from $(\Psi_i)_{i \in \cN}$}
\label{subsec:proof_Psi_selections}

Recall that $\kappa_i:\mathcal{T}_i\rightarrow\mathcal{T}_{i+1}$
is the mapping that indicates the type of Player~$i+1$
for each type of Player~$i$.
In particular, 
$\kappa^{-1}_{i}(t_{i+1}) \equiv \{ t_{i}\in \cT_i \mid \kappa_i (t_{i}) = t_{i+1} \}$
is the set of all types of Player~$i$ that are consistent with the type of Player~$i+1$.

An element $(x_{i:n},\rho_{i:n}) \in \cX_{i:n} \times \cP_{i:n}$
is a pair consisting of a mixed action vector for players $[i\!:\!n]$
and a payoff function for a game restricted to these players.
It will prove convenient to denote, for each $j \in [i:n]$, Player~$j$'s payoff in this game by
\[ \gamma_j(x_{i:n},\rho_{i:n}) \equiv \rho_j(x_{i:n}), \]
and set 
\[ \gamma_{i:n}(x_{i:n},\rho_{i:n}) \equiv (\gamma_j(x_{i:n},\rho_{i:n}))_{j=i}^n. \]
\color{black}


The next lemma intuitively states that every point $(t_i,x_{i:n},\rho_{i+1:n}, \widehat\rho_{i:n})$ in the graph of $\Psi_i$ 
can be extended to a point in the graph of $\Psi_{i-1}$.
\color{black}

\begin{lemma}\label{prop:mertens_groundwork}
Fix $i=2,3,\ldots,n$,  $t_i \in \cT_i$, $x_{i+1:n} \in \cX_{i+1:n}$, and  $\rho_{i+1:n} \in \cP_{i+1:n}$. 
If $(x_{i:n}, \widehat\rho_{i:n}) \in \Psi_i( t_i, x_{i+1:n}, \rho_{i+1:n}  )$, 
then $\Psi_{i-1}( t_{i-1}, x_{i:n},  \widehat\rho_{i:n} ) \neq \emptyset$,
for each $t_{i-1} \in \kappa^{-1}_{i-1}(t_i)$;
For $i=n$, the terms $x_{i+1:n}$ and $\rho_{i+1:n}$ are vacuous.
Moreover, there exists a Borel mapping $f:\cT_{i-1} \rightarrow \cX_{i-1:n} \times \cP_{i-1:n}$ such that  
\begin{equation}\label{eq:mertens_groundwork_eq1}
f(t_{i-1}) \in  \Psi_{i-1}(t_{i-1}, x_{i:n},  \widehat\rho_{i:n} ), \ \ \ \forall t_{i-1} \in \kappa^{-1}_{i-1}(t_i),
\end{equation}
and
\begin{equation}\label{eq:mertens_groundwork_eq2}
\widehat\rho_{i:n} = \int_{\cT_{i-1}} 
\gamma_{i:n}(f(t_{i-1})) {\mathbb P}(dt_{i-1} \mid t_i).
\end{equation}
\end{lemma}

Eq.~(\ref{eq:mertens_groundwork_eq1}) states that for any fixed $(x_{i:n},\widehat\rho_{i:n})$, 
on $\kappa^{-1}_{i-1}(t_i)$, $f(\cdot)$ is a selector of the correspondence
\[
t_{i-1} \Rightarrow \Psi_{i-1}( t_{i-1}, x_{i:n},
\widehat\rho_{i:n}),
\]
which, by Lemma~\ref{lemma:6}, has a Borel graph and nonempty compact values.

\bigskip

\begin{proof}
Suppose that $(x_{i:n}, \widehat\rho_{i:n}) \in \Psi_i( t_i, x_{i+1:n}, \rho_{i+1:n} )$. 
Then there is a sequence of indices $(k_l)_{l=1}^\infty$ such that  
\begin{equation}
\label{equ:94}
x_{i:n} = \lim_{l \to \infty} s_{i:n}^{k_l}(t_i) \hbox{\ \ \  and \ \ \ } 
\widehat\rho_{i:n} = \lim_{l \to \infty} U_{i:n}(s^{k_l},\cdot|t_i).
\end{equation} 

Applying Lemma \ref{lemma:aumann_integral_closure}
to $X =\cT_{i-1}$,
${\mathbb P}(\mathrm{d} x) = {\mathbb P}(\mathrm{d} t_{i-1}|t_i)$,
$Y = \cX_{i:n} \times \cP_{i:n}$,
$f_l(t_i) = (s^{k_l}_{i:n}(t_i),U_{i:n}(s^{k_l}_{i:n}|t_i)$ for each $l \in \bN$,
$\zeta=\gamma_{i:n}$,
and $z^* = (x_{i:n},\widehat\rho_{i:n})$,
we conclude that 
there is a Borel selector $f$ of the correspondence 
\[ t_{i-1} \Rightarrow \overline{\rm Lim}\left( \left(s^{k_l}_{i-1:n}(t_{i-1}),  U_{i-1:n}(s^{k_l}_{1:i-1},\cdot|t_{i-1})\right)_l \right) \]
such that  (\ref{eq:mertens_groundwork_eq2})  holds.  
Condition~\eqref{equ:94} implies that 
\begin{align}
\emptyset \neq	\overline{\rm Lim} \left( \left(s^{k_l}_{i-1:n}(t_{i-1}), U_{i-1:n}(s^{k_l}_{1:i-1},\cdot|t_{i-1})\right)_l \right) \subseteq \Psi_{i-1}(t_{i-1}, x_{i:n},  \widehat\rho_{i:n}), \nonumber
\end{align}
which completes the proof.
\end{proof}

\bigskip

The next result is a measurable version of Lemma~\ref{prop:mertens_groundwork}.

\begin{lemma}\label{cor:mertens}
For each $i=2,\ldots,n$,  there is a mapping $f_{i-1}:\cT_{i-1} \times \mathrm{Gr}(\Psi_i) \rightarrow \cX_{i-1:n} \times \cP_{i-1}$ such that
for each $(t_i, x_{i+1:n}, \rho_{i+1:n},  x_{i:n}, \widehat\rho_{i:n}) \in \mathrm{Gr}(\Psi_i)$,
\begin{equation}\label{eq:mertens_inclusion}
	f_{i-1}(t_{i-1}, t_i, x_{i:n}, \rho_{i+1:n}, \widehat\rho_{i:n}) \in  \Psi_{i-1}( t_{i-1}, x_{i:n}, \widehat\rho_{i:n})\ ,\  \forall t_{i-1} \in \kappa^{-1}_{i-1}(t_i),
	\end{equation}
	and
	\begin{equation}\label{eq:mertens_equality}
    \widehat\rho_{i:n} = \int_{\cT_{i-1}}  \gamma_{i:n}\bigl(f_{i-1}(t_{i-1}, t_i, x_{i:n}, \rho_{i+1:n}, \widehat \rho_{i:n})\bigr) {\mathbb P}(dt_{i-1} \mid t_i).
	\end{equation}
For $i=n$, 
in both \eqref{eq:mertens_inclusion}
and~\eqref{eq:mertens_equality}
the term $\rho_{i+1:n}$ is vacuous. 
%
\end{lemma}

\begin{remark}\normalfont[Dependence of $f$ on $\rho_{i+1:n}$]
$\text{ }$ The dependence of $f$ on $\rho_{i+1:n}$ seems superfluous, but may be indispensable. 
This dependence parallels the dependence of the equilibria in stochastic games 
on the previous state \emph{and} on the current state,
whose existence was proved by \cite{MePa91} using the Measurable ``Measurable Choice" Theorem of \cite{Me87}.
\end{remark}

\begin{remark}\label{rem:f_x_j}\normalfont
By the properties of $\Psi_{i-1}$, if 
\[ (\widetilde{x}_{i-1:n}, \widetilde{\rho}_{i-1:n}) = 	f_{i-1}(t_{i-1}, t_i, x_{i:n}, \rho_{i+1:n}, \widehat\rho_{i:n}), \]
then $\widetilde{x}_{i:n} = x_{i:n}$.
Later we will make use of this observation.
\end{remark}

\noindent\textbf{Proof of Lemma~\ref{cor:mertens}:}
Fix $i=2,\ldots,n$, and 
apply Theorem~\ref{thm:mertens}
with the following parameters:
\begin{itemize}
	\item $Y = \mathrm{Gr}(\Psi_i)$.
	\item $Z = \cT_{i-1}$.
	\item $B =  \{ (t_{i-1}, t_i, x_{i+1:n}, \rho_{i+1:n}, x_{i:n},\widehat\rho_{i:n}) \in Z \times Y  \mid t_i = \kappa_{i-1}(t_{i-1}) \}$.
	\item $q(t_{i-1} \mid t_i, x_{i+1:n}, \rho_{i+1:n}, x_{i:n},\widehat\rho_{i:n}) = {\mathbb P}(t_{i-1} \mid t_i)$.
	\item $W = \cX_{i-1:n} \times \cP_{i-1:n}$.
\item $\zeta$\, is the evaluation map 
 $\gamma_{i:n}$ \color{black} on $\cX_{i-1:n} \times \cP_{i-1:n}$.
\item $F$ is defined on $B$ by $F(t_{i-1}, t_i, x_{i+1:n}, \rho_{i+1:n}, x_{i:n}, \widehat\rho_{i:n})\equiv \Psi_{i-1}(t_{i-1},x_{i:n}, \widehat\rho_{i:n})$.
	\item $F^\diamond:Y \rightarrow \cP_{i:n}$ as defined in (\ref{eq:def_F_diamond}).
\end{itemize}
By Theorem \ref{thm:mertens}, 
there is a Borel mapping $g:\mathrm{Gr}(F^\diamond) \times \cT_{i-1} \rightarrow \cX_{i-1:n} \times \cP_{i-1:n}$ 
such that for every $y = (t_i, x_{i+1:n}, \rho_{i+1:n}, x_{i:n}, \widehat\rho_{i:n}) \in Y = \mathrm{Gr}(\Psi_i)$,
every $u \in F^\diamond(y)$, 
and every $t_{i-1} \in \kappa^{-1}_{i-1}(t_i)$,
we have 
\[ g(y,u,t_{i-1}) \in F(t_{i-1},y) = \Psi_{i-1}(t_{i-1},x_{i:n}, \widehat\rho_{i:n}), \]
and 
\[
u = \int_{\cT_{i-1}} \gamma_{i:n}\bigl(g(y,u,t_{i-1})\bigr) {\mathbb P}(dt_{i-1} \mid t_i).
\]

It follows from Lemma~\ref{prop:mertens_groundwork} that for any $y = (t_i, x_{i+1:n}, \rho_{i+1:n}, x_{i:n}, \widehat\rho_{i:n}) \in Y$, 
noting that $B_y = \kappa^{-1}_{i-1}(t_i)$ and hence ${\mathbb P}(B_y \mid t_i)=1$, 
there is $f:\cT_{i-1} \rightarrow  \cP_{i-1:n}$ such that  $f|_{B_y}$ is a Borel selector of $t_{i-1} \rightarrow	F(t_{i-1},t_i,x_{i+1:n}, \rho_{i+1:n}, x_{i:n}, \widehat\rho_{i:n}) = \Psi_{i-1}(t_{i-1},x_{i:n}, \widehat\rho_{i:n})$ and such that  (\ref{eq:mertens_groundwork_eq2}) holds, which means that 
\[ \widehat\rho_{i:n} \in F^\diamond(t_{i-1}, t_i, x_{i+1:n}, \rho_{i+1:n}, x_{i:n}, \widehat\rho_{i:n}). \]
Defining $f_{i-1}:\cT_{i-1} \times \mathrm{Gr}(\Psi_i)  \rightarrow \cX_{i-1:n} \times \cP_{i-1:n}$ by
\[
f_{i-1}\big(t_{i-1},  t_i, x_{i:n}, \rho_{i+1:n}, \widehat\rho_{i:n} \big) \equiv g\big((t_i, x_{i+1:n}, \rho_{i+1:n}, x_{i:n}, \widehat\rho_{i:n}), \widehat\rho_{i:n}, t_{i-1} \big),
\]
yields the desired result. 
\endpf

\subsubsection{Construction of a Bayesian 0-Equilibrium}
\label{subsec:proof_build_eq}

In this section we define a strategy profile 
$s_* = (s_*^1,\ldots,s_*^n)$,
and prove that it is a Bayesian 0-equilibrium.
\color{black}

For any set $A$, denote by $\pi_A$ the projection map to $A$.
Let $f_{n-1},\ldots,f_1$ be the mappings given by Lemma~\ref{cor:mertens}. 
Define mappings $(g_i)_{i=1}^n$ recursively (backwards) as follows.
\begin{itemize}
\item Let $g_n:\cT_n \rightarrow \cX_n \times \cP_n$ be a Borel selector of $\Psi_n$, which exists by Theorem \ref{thm:selector} (recall that $\Psi_n$ depends only on the type of Player $n$).
Define 
\begin{equation}
\label{equ:88}
s^*_n \equiv \pi_{\cX_n} \circ g_n.
\end{equation}

\item For $i = 2,\ldots,n$,  assuming that  we have already defined mappings $(g_j)_{j=i}^n$,
let $g_{i-1}:\cT_{i-1} \rightarrow  \cX_{i-1:n} \times \cP_{i-1:n}$ be defined by
\begin{equation}\label{eq:mertens_applied}
\hspace{-1cm}
g_{i-1}(t_{i-1}) \equiv f_{i-1}\Big( t_{i-1}, 
\kappa_{i-1}(t_{i-1}), 
s^*_{i:n}(\kappa_{i-1}(t_{i-1})), 
\pi_{\cP_{i+1:n}} \circ g_{i+1}(\kappa_i(\kappa_{i-1}(t_{i-1}))), 
\pi_{\cP_{i:n}} \circ g_i(\kappa_{i-1}(t_{i-1})) \Big),
\end{equation}
where the penultimate argument is vacuous when $i=n-1$, and set
\begin{equation}
\label{equ:89}
 s^*_{i-1}\equiv\pi_{\cX_{i-1}} \circ g_{i-1}.
\end{equation}
\end{itemize}

For $i=1,\dots,n-1$, the mapping $g_i$ is well defined
provided 
\[ \Big(
\kappa_{i-1}(t_{i-1}), 
s^*_{i+1:n}(\kappa_{i-1}(t_{i-1})), 
\pi_{\cP_{i+1:n}} \circ g_{i+1}(\kappa_i(\kappa_{i-1}(t_{i-1}))), 
s^*_{i:n}(\kappa_{i-1}(t_{i-1})), 
\pi_{\cP_{i:n}} \circ g_i(\kappa_{i-1}(t_{i-1})) \Big) \]
always lies in $\mathrm{Gr}(\Psi_i)$. 
The next lemma states that this is indeed the case.

\begin{lemma}
The mappings $(g_i)_{i=1}^{n-1}$ are well defined.
\end{lemma}

\begin{proof}
Denote $t_i = \kappa_{i-1}(t_{i-1})$.
We will prove that 
for each $t_{i-1} \in \cT_{i-1}$, 
\begin{equation}
\label{equ:91}
    \Big( t_i, s^*_{i+1:n}(t_i),  \pi_{\cP_{i+1:n}} \circ g_{i+1}(\kappa_i(t_i)), s^*_{i:n}(t_i), \pi_{\cP_{i:n}} \circ g_i(t_i)\Big) \in \mathrm{Gr}(\Psi_i),
\end{equation} 
and
\begin{equation}
\label{equ:92}
s^*_{i:n}(t_i) = \pi_{\cX_{i:n}} \circ g_i(t_i).
\end{equation}
Note the slight difference between \eqref{equ:89} and \eqref{equ:92}: 
in the former we set 
$s^*_{j}(t_j)$ to be $\pi_{\cX_{j}} \circ g_{j}(t_j)$,
while the latter claims that $s^*_j(t_j) = \pi_{\cX_j} \circ g_i(t_i)$; 
by construction, using Remark \ref{rem:f_x_j}, these agree.

We prove \eqref{equ:91} and \eqref{equ:92} by induction, starting with $i=n$.
In this case, \eqref{equ:92} holds by \eqref{equ:88}.
Moreover, the left-hand side in~\eqref{equ:91} becomes
\[ \bigl(t_n,s^*_n(t_n), \pi_{\cP_n} \circ g_n(t_n)\bigr) 
= \bigl(t_n,\pi_{\cX_n} \circ g_n(t_n), \pi_{\cP_n} \circ g_n(t_n)\bigr) 
= \bigl(t_n, g_n(t_n)\bigr), \]
which lies in $\mathrm{Gr}(\Psi_i)$ since $g_n$ is a selector of $\Psi_n$.

Let now $i \in [1:n-1]$, 
and assume by induction that~\eqref{equ:91} and~\eqref{equ:92} hold for $i+1$.
By~\eqref{eq:mertens_inclusion}, \eqref{eq:mertens_applied}, and the induction hypothesis,
\begin{align}\label{eq:g_induction_hyp}
g_{i}(t_{i}) 
&\in \Psi_{i}\bigl( t_{i}, s^*_{i+1:n}(t_{i+1}), \pi_{\cP_{i+1:n}} \circ g_{i+1}(t_{i+1})\bigr),
\end{align}
where $t_{i+1} = \kappa_i(t^i)$.  It follows from the properties of $\Psi_i$ --- or from Remark \ref{rem:f_x_j} and the definition of $g_i$ in \eqref{eq:mertens_applied} --- that $\pi_{X_{i+1:n}} (g_i(t_i)) = s^*_{i+1:n}(t_{i+1})$. By definition,
$s^*_i(t^i) = \pi_{X_i}(g_i(t^i))$. Putting these together shows that \eqref{equ:92} holds for $i$.
Once we proved that~\eqref{equ:92} holds for $i$,
we have
\[ (s^*_{i:n}, \pi_{\cP_{i:n}}\circ g_{i}(t_{i}))
= (\pi_{\cX_{i:n}}\circ g_{i}(t_{i}), \pi_{\cP_{i:n}}\circ g_{i}(t_{i}))
=
g_{i}(t_{i}). \]
Hence, the left-hand side in~\eqref{equ:91} becomes
\[ \Big( t_i, s^*_{i+1:n}(t_{i+1}), \pi_{\cP_{i+1:n}} \circ g_{i+1}(\kappa_i(t_i)), g_i(t_i)\Big). \]
By the induction hypothesis (\ref{eq:g_induction_hyp}), this element is in $\text{Gr}(\Psi_i)$, as required.
\end{proof}
\color{black}

\bigskip
The next result relates $U_{i:n}(s^*,\cdot|t_i)$ to $g_i(t_i)$.

\begin{lemma}\label{lem:relate_g_p}
For each $i \in \cN$ and each $t_i \in \cT_i$,
\begin{equation}
U_{i:n}(s^*,\cdot|t_i)= \pi_{\cP_{i:n}} \circ g_i(t_i).
\end{equation}
\end{lemma}

\begin{proof}
We prove the claim by induction on $i$.
We start with $i=1$.
By~\eqref{eq:mertens_applied} and \eqref{eq:mertens_inclusion}, for each $t_1 \in \cT_1$,
	\[
	g_1(t_1) \in \Psi_1\big( t_1, s^*_{2:n}(t_1),\pi_{\cP_{2:n}}\circ g_{2}(\kappa_1(t_1) )  \big),
	\]
	which, by~\eqref{equ:rho1}, implies the result for $i=1$.

Fix now $i > 1$, 
and suppose the claim holds for $i-1$.
For each $t_i \in \cT_i$,
\begin{align}
U_{i:n}(s^*,\cdot|t_i)
& 	= \int_{\cT_{i-1}} \gamma_{i:n} \bigl(s^*_{i-1:n}(t_{i-1}), U_{i-1:n}(s^*_{1:i-1},\cdot|t_{i-1}) \bigr)  {\mathbb P}({\rm d} t_{i-1} \mid t_i ) \label{equ:81}\\
	& = \int_{\cT_{i-1}} \gamma_{i:n} \bigl(s^*_{i-1}(t_{i-1}), \pi_{\cP_{i-1:n}} \circ g_{i-1}(t_{i-1}) \bigr)  {\mathbb P}({\rm d} t_{i-1} \mid t_i ) \label{equ:82}\\
	& = \int_{\cT_{i-1}} \gamma_{i:n} \bigl(g_{i-1}(t_{i-1}) \bigr)  {\mathbb P}({\rm d} t_{i-1} \mid t_i ) \label{equ:83}\\
	& = \int_{\cT_{i-1}} \gamma_{i:n} \Big( f_{i-1}\big( t_{i-1}, t_i, s^*_{i:n}(t_i), \pi_{\cP_{i:n}} \circ g_i(t_i), \pi_{\cP_{i+1:n}} \circ g_{i+1}(\kappa_i(t_i)) \big)  \Big)  {\mathbb P}({\rm d} t_{i-1} \mid t_i ) \label{equ:84}\\
	& = \pi_{\cP_{i:n}} \circ g_i(t_i), \label{equ:85}
\end{align}
where \eqref{equ:81} holds since information is nested, 
\eqref{equ:82} holds by the induction hypothesis, 
\eqref{equ:83} holds by~\eqref{equ:92}, 
\eqref{equ:84} holds by~(\ref{eq:mertens_applied}), 
and \eqref{equ:85} holds by (\ref{eq:mertens_equality}).
\end{proof}

\bigskip
We can now conclude the proof of Theorem~\ref{thm:0}.

\begin{lemma}
$s^*$ is a Bayesian 0-equilibrium.
\end{lemma}

\begin{proof}
Fix $i \in \cN$ and $t_i \in \cT_i$. Then
\begin{align*}
\bigl(s^*_i(t_i),U_{i:n}(s^*|t_i)\bigr) 
= g_i(t_i)  
&\in \Psi_i\big( t_i, s^*_{i+1:n}(t_i),  \pi_{\cP_{i+1}} \circ g_{i+1}(\kappa_i(t_i)) \big)
\nonumber\\ 
&=  \Psi_i\big( t_i, s^*_{i+1:n}(t_i), U_{i+1:n}(s^*|\kappa_i(t_i))  \big),
\end{align*}
 where the first equality holds by~\eqref{equ:92} and Lemma \ref{lem:relate_g_p}, 
 the inclusion holds by~\eqref{eq:mertens_applied} and~\eqref{eq:mertens_inclusion},
 and the second equality holds by Lemma~\ref{lem:relate_g_p}.
Corollary \ref{cor:equilibrium_psi} now implies that $s^*$ is a Bayesian 0-equilibrium.
\end{proof}

\section{Discussion}
\label{section:discussion}

Our study raises several extensions and open problems, which we present in this section.

\paragraph*{Inconsistent beliefs}

We assumed that all players share the same belief ${\mathbb P}$ on $\mathcal{T}$.
Our result holds also when beliefs are inconsistent, namely, each player $i \in \cN$ holds a different belief ${\mathbb P}_i$ on $\mathcal{T}$.
In this case, each player's payoff is defined relative to ${\mathbb P}_i$ (rather than relative to ${\mathbb P}$).
The proof remains largely the same, requiring only minor adaptations.



\paragraph*{Weakening Assumption \textbf{A1}.}
We proved Theorem~\ref{thm:0} under the assumption that the action sets are finite, the payoffs are bounded, and the type spaces are Polish. One could ask whether the result still holds when weakening this assumption, namely,
requiring that (i) the action sets $\mathcal{A}_i$ are compact metric,
and
(ii) the payoff functions are integrable, and continuous in $a$ for every fixed vector of types.
In a subsequent work we prove that in this case a Bayesian $\ep$-equilibrium exists.
We do not know whether a Bayesian 0-equilibrium exists in this case as well.

\paragraph*{Tree-like information}
The information in the game is nested
if the players are ordered,
and each player knows the types of all players that follow her in that order.
Suppose that the players are vertices of a tree,
and each player knows the types of all her descendants. 
Does a Bayesian 0-equilibrium exist?
Our proof extends to this case, 
conditioned that the payoff of each player depends only on the types and actions of her descendants and her ancestors.

\paragraph*{Multi-stage Bayesian games}
We showed that nested information is a sufficient condition for the existence of a Bayesian $0$-equilibrium in a general class of \emph{single-stage} Bayesian games. 
A natural question regards the existence of a   Bayesian $0$-equilibrium in \emph{multi-stage} Bayesian games with nested information.

Specifically, a multi-stage Bayesian game with $m \geq2$ stages is similar to a Bayesian game as in Definition~\ref{def:bayesian:game}, 
except that the players play for $m$ stages and Player~$i$'s type is stage dependent.
That is, Player~$i$'s type is a vector $t_i = (t_i^1,t_i^2,\ldots,t_i^m)$,
the collection of types of the players are drawn at the outset,
and at each stage $1\leq k\leq m$, each player learns her own stage type $t_i^k$.
Player~$i$'s payoff at each stage $k$
depends on 
the players' stage-types $(t_i^k)_{i \in \cN}$, and the players stage actions.

Repeating the arguments of the current paper implies existence of $0$-Bayesian equilibrium in the multi-stage game whenever the payoff functions $(R_{i,t})_{i \in \cN, 1 \leq t \leq m}$ satisfy \textbf{A1} and \textbf{A2}, and the nested information assumption is replaced by the following two conditions:\color{black}
\begin{enumerate}
\item [\textbf{A3-a}] Information is nested:
    For each $k\in\{1,2,\ldots,m\}$ and each $i \in \{1,2,\dots,n-1\}$, 
    $t_{i+1}^k$ is a determined by $t_i^k$.
    
\item [\textbf{A3-b}] Information is revealed with delay of one stage: For every $k\in\{1,2,\ldots,m-1\}$, 
    $t_1^k$ is determined by $t_n^{k+1}$;
i.e., 
the information of Player~$1$ is available to Player $n$ with delay of one stage.  
\end{enumerate}
\color{black}
Models of multi-stage Bayesian games that satisfy these two assumptions have been studied
in the literature on control under the name  \emph{information structure with one-step-delay}, 
see, e.g., Aicardi et al.~\cite{Aicardi1987}, Nayyar et al.~\cite{Nayyar2010}, and Varaiya and Walrand \cite{Varaiya1978}. 
We conjecture that Conditions \textbf{A1}, \textbf{A2}, and \textbf{A3-a}  
(without \textbf{A3-b}) are not sufficient to guarantee the existence of a Bayesian $0$-equilibrium in multi-stage Bayesian games.  

\paragraph*{Stopping games with asymmetric information.}

One class of multi-stage Bayesian games is the class of stopping games with asymmetric information. 
In stopping games, players choose in each round to stop or continue; 
the game ends when at least one player chooses to stop, and the payoff profile is 
given by an $\bR^N$-valued stochastic process that depends on 
the set of players who chose to stop.
There is also some designated payoff in case the game never 
terminates.
Incomplete 
and
asymmetric information can be introduced by adding uncertainty on 
the payoff process.

Such games have been studied both in the framework of discrete time and that of continuous time, see, e.g., 
Gr\"un \cite{Grun},
Lempa and Matom\"aki \cite{LM2013},
Gensbittel and Gr\"un \cite{Gensbittel2019},  
Esmaeeli, Imkeller, and Nzengang \cite{Esmaeeli 2019},
Gapeev and Rodosthenous \cite{Gapaev 2021},
P\'erez {et al.}~\cite{Perez 2021},
Jacobovic \cite{Jacobovic2022},
and
De Angelis {et al.}~\cite{De Angelis2022, De Angelis2022b}.
The open problem we raised for multi-stage Bayesian games translates to the following: 
Does every stopping game (in discrete or continuous time) with finite horizon and information structure that satisfies \textbf{A3-a} (or a continuous-time analog) admit a Bayesian $0$-equilibrium?

\paragraph*{Nested information and the value of information.}

Various aspects of the value of information in two-player zero-sum Bayesian games have
been studied, e.g., by
De Meyer et al.~\cite{De Meyer2010}, Lehrer and Rosenberg \cite{Lehrer2006}, and Ui \cite{Ui2015}.
As we now argue, nested information is related to the study of the value of information in multiplayer Bayesian games with symmetric information.
Indeed, consider, say, the best equilibrium payoff of Player~$i$ in a multiplayer Bayesian game with symmetric information,
and her best equilibrium payoff in the same game, when she does not obtain any information (while the other players obtain the symmetric information).
As the latter game has nested information,
our result ensures that the measure suggested above (and other natural variants) is well defined.
\color{black}

\paragraph*{The universal belief space.}

The universal belief space is the space that contains all infinite belief hierarchies,
see Mertens and Zamir \cite{MZ 1985}.
As we have seen, 
when information is nested,
the players' belief hierarchies are determined by the first $n$ levels in the belief hierarchy.
It will be interesting to know whether there is a canonical form to the universal belief space in this case.

In Aumann's model of incomplete information,
the information of a player is given by a partition of the state space,
and the information structure is nested
if under some ordering of the players,
Player~$i$'s partition refines Player~$j$'s partition whenever $i < j$.
Call an information structure \emph{finite}
if each belief hierarchy is determined by its first $k$ levels,
for some $k \in \bN$.
Nested information structures are finite.
An example of a finite information structure that is not nested 
is that of a ``piecewise'' nested information structure:
the state space is divided into several common knowledge components,
and in each component, the information is nested,
possibly with a different ordering between the players.
Are the piecewise nested information structures \emph{all} finite information structures?
\color{black}


\end{document}